\newtheorem{thm}{Theorem}[subsection]
\newtheorem{cor}[thm]{Corollary}
\newtheorem{lem}[thm]{Lemma}
\newtheorem{prop}[thm]{Proposition}
\newtheorem{rem}[thm]{Remark}
\newcommand{\EE}{\mathbb{E}}
\newcommand{\NN}{\mathbb{N}}
\newcommand{\PP}{\mathbb{P}}
\newcommand{\RR}{\mathbb{R}}
\newcommand{\Cc}{\mathcal{C}}
\newcommand{\Ff}{\mathcal{F}}
\newcommand{\ind}{\mathbb{I}}
\newcommand{\law}{\;\mathop{=}\limits^{(d)}\;}
\newcommand{\equi}{\mathop{\sim}\limits}
\title{A stable Langevin model with diffusive-reflective boundary conditions}
\author{J.-F. Jabir\footnote{National Research University Higher School of Economics, Moscow, Russian Federation; E-mail: jjabir@hse.ru.}\; and C. Profeta\footnote{LaMME, Univ Evry, CNRS, Universit\'e Paris-Saclay, 91025, Evry, France; christophe.profeta@univ-evry.fr.} }
\date{\empty}
\begin{document}
\maketitle
\begin{abstract}In this note, we consider the construction of a one-dimensional stable Langevin type process confined in the upper half-plane and submitted to reflective-diffusive boundary conditions whenever the particle position hits $0$. We show that two main different regimes appear according to the values of the chosen parameters. We then use this study to construct the law of a (free) stable Langevin process conditioned to stay positive, thus extending earlier works on integrated Brownian motion. This construction further allows to obtain the exact asymptotics of the persistence probability of the integrated stable L\'evy process. In addition, the paper is concluded by solving the associated  trace problem in the symmetric case.
\end{abstract}
{\bf Keywords:} Integrated Stable L\'evy processes; Hitting times; Reflective-diffusive boundary conditions.

\smallskip\noindent
{\bf 2010 Mathematics Subject Classification:} 60G52; 60F99; 60J50.

\section{Introduction}\label{sec:1}
\noindent
Let $(L_t,\,t\geq 0)$ be a strictly $\alpha$-stable L\'evy process, defined on some filtered probability space $(\Omega,\Ff,(\Ff_t,\,t\geq 0),\PP)$, with scaling parameter $\alpha\in(0,2]$ and positivity parameter $\rho$. Its characteristic exponent is chosen as (see Zolotarev \cite[p.17]{Zolo-86}):
$$\Psi(\lambda)\; =\;\log(\EE[e^{i \lambda L_1}])\; =\; -(i \lambda)^\alpha e^{-i\pi\alpha\rho\, \text{sgn}(\lambda)}, \qquad \lambda\in \RR,$$
where the positivity parameter $\rho$ is given via the usual asymmetric parameter $\beta\in[-1,1]$ by
$$
\rho=\PP(L_1\geq 0)=\frac{1}{2}+\frac{1}{\pi\alpha}\arctan(\beta\tan(\pi\alpha/2)).
$$
We assume that $|L|$ is not a subordinator, i.e. $\rho\in (0,1)$. For $\theta, c$  two (strictly) positive constants,  we consider the SDE:
\begin{equation}
\label{eq:ConfinedModel}
\left\{
\begin{aligned}
&X_t=X_0+\int_0^t U_s\,ds,\\
&U_t=U_0+L_t+\sum_{n\geq 1}\left((1-\beta_n)(\theta^n M_n-U_{\tau_n^-})-\beta_n (1+ c)U_{\tau_n^-}\right)\ind_{\{\tau_n\leq t\}},
\end{aligned}
\right.
\end{equation}
where $(\tau_n,\,n\in \mathbb{N})$ are the successive hitting times of $(X_t,\,t\geq 0)$ at the boundary $x=0$; namely
\[
\tau_n=\inf\{t> \tau_{n-1};\,X_t=0\},\,\qquad \tau_0=0,
\]
with
\begin{equation*}
\ind_{\{\tau_n\leq t\}}=\left\{
\begin{aligned}
&1\;\mbox{if}\,\tau_n\leq t,\\
&0\;\mbox{otherwise},
\end{aligned}
\right.
\end{equation*}
and where the sequences $(\beta_n,\,n\geq 1)$ and $(M_n,\,n\geq 1)$  are independent random variables, also independent from $(X_0, U_0)$ and $(L_t,\,t\geq 0)$, such that
\begin{enumerate}
\item  the random variables $(\beta_n,\,n\geq 1)$ are  i.i.d.  Bernoulli r.v.'s with parameter $p:=\PP(\beta_1=1)$,
\item the random variables $(M_n,\,n\geq 1)$ are i.i.d., non-negative and  such that $\PP(M_1=0)=0$. We further assume that they admit moment of order at least $\alpha$.
\end{enumerate}
The model \eqref{eq:ConfinedModel} describes a type of Langevin model where, at each time $t$, the components $X_t$ and $U_t$ represent respectively the position and the velocity of some fluid particle, interacting with a physical wall located at the axis $x=0$. The particle positions are confined within the upper-half space $[0,\infty)$, the velocity paths are c\`adl\`ag, governed
by the L\'evy process $(L_t,\,t\geq 0)$ when $X_t$ is in the interior $(0,\infty)$ and, whenever the particle hits the frontier $x=0$, the velocity component is
submitted to either a partially absorbing boundary condition or a diffusive boundary condition, respectively quantified by $c $ and $\theta^nM_n$. More precisely, for all $n$, the left-hand limit $U_{\tau^-_n}=\lim\limits_{t\rightarrow \tau_n,t< \tau_n}U_t$ describes the ingoing velocity of the particle and
 the reflective-diffusive interaction between the particle and the confinement frontier implies that the velocity $U_{\tau_n}=\lim\limits_{t\rightarrow \tau_n,t> \tau_n}U_t$ immediately after the impact is given by :
\begin{equation}
\label{eq:boundaryVelocity}
U_{\tau_n}=\triangle U_{\tau_n}+U_{\tau_n^-}=
\left\{
\begin{aligned}
&-cU_{\tau^-_n}\,\mbox{ if}\,\beta_n=1,\\
&\\
&\theta^n M_n\,\mbox{ if}\,\beta_n=0.
\end{aligned}
\right.
\end{equation}
As $U_{\tau^-_n}$ is necessarily non-positive, the particle either re-emerges in $(0,\infty)$ or remains stuck to the axis $x=0$, and, in any cases, remains confined within $[0,\infty)$.\\

The   (partially) absorbing case ($p=1$, i.e. $\beta_n=1$ a.s.) was previously introduced and intensively investigated in Bertoin \cite{Bertoin-07,Bertoin-08} (in the case of a totally absorbing wall, $c=0$) and Jacob \cite{Jacob-12,Jacob-13} when $L=B$ is a one dimensional Brownian motion. Jacob \cite{Jacob-12} exhibited the critical level $c_{\text{crit}}=\exp(-\pi/\sqrt{3})$ separating sticky and non-sticky situations; whenever $c\geq c_{\text{crit}}$, $\lim_n\tau_n=\infty$ a.s., whereas if $c< c_{\text{crit}}$, $\lim_n\tau_n<\infty$ a.s. In the situation where $L=B$ is a standard Brownian motion, the corresponding zeroes of the integrated Brownian motion $(x+tu+\int_0^t B_s\,ds, \, t\geq0)$ have been the subject of a long history of studies starting from the early work of McKean \cite{McKean-63} and the numerous works of Aim\'e Lachal (see \cite{Lachal-97} among others). In the case of a stable L\'evy process, few results on the distribution of the zeroes of an integrated L\'evy process are known. Nevertheless the necessary and sufficient conditions ensuring the non-accumulation of $(\tau_n,\,n\geq 1)$ in finite time will be exhibited in Section \ref{sec:2}.\\

The (totally) diffusive situation ($p=0$, i.e. $\beta_n=0$ a.s.) models the particular case of Maxwell boundary conditions introduced in the kinetic theory of gases (see e.g. Chapter $8$ in Cercignani, Illner and Pulvirenti \cite{CerIllPul-94}). The particular situation where $\theta=1$ and $(M_n, n\geq1)$ is distributed according to a Maxwellian distribution of the form:
$$
\frac{v}{\Theta}\exp\left\{-\frac{|v|^{2}}{2\Theta}\right\}\ind_{\{v\geq 0\}},\qquad \text{with }\Theta>0,
$$
 corresponds to the situation where a (gas) particle interacts with a surface in thermodynamical equilibrium at temperature $\Theta$, and where the particle re-emerges from the wall with a velocity $M_n$ after each impact.

 The introduction of the term $\theta^n$ enables to balance the effects of the reflective and diffusive boundary conditions, softening (when $\theta<1$) or increasing $(\theta>1)$ the heat transfer from the wall to the particle. In particular $\theta^n$ allows to exhibit different asymptotic regimes for the sequence $(\tau_n,\,n\geq 1)$, see Theorems 2.1.1 and 2.2.1. The addition of such component provides a peculiar wall-particle interaction when compared to classical boundary condition for kinetic equations (see Section \ref{sec:4}) as it is inhomogeneous and induces a dependency of the boundary condition with respect to the past trajectory of the particle near the wall. %

 Comparable, yet distinct, situations are currently studied in the physics literature, for instance in Mohammadzadeh and Struchtrup \cite{MohStr-15} where the Maxwellian distribution depends on the ingoing velocity of the particle, or in Chibbaro and Minier \cite{ChiMin-08} where the boundary condition, modeling particle deposition, depends on the time that the particle has spent in a near-wall region. \eqref{eq:ConfinedModel} can be thought as a illustrative and prototypical model which will be used for investigating more general situations (time-velocity dependent absorption diffusion coefficient, multidimensional setting, ...) in future works.

 Let us also point out that the connection between transport equation endowing diffusive-reflective boundary conditions and Langevin models driven by a Poisson process has been studied in Costantini and Kurtz \cite{CosKur-06} and Costantini \cite{Costantini-91}. The link between \eqref{eq:ConfinedModel} and kinetic equations will be further discussed in Section \ref{sec:4}.\\

Our aim in this paper is to study the limit behaviour of the sequence $(\tau_n,\,n\geq 1)$, according to the parameters $\theta$ and $c$. Such results then allow to discuss some related problems for the construction of a conditioned stable Langevin process, as well as some relations to kinetic equations.
\begin{itemize}
\item[$a)$] In the next Section \ref{sec:2}, we describe the asymptotic behaviour of the sequence $(\tau_n,\,n\geq 1)$ in the case where $(X_0,U_0)$ belongs to the semi-finite line $\{0\}\times(0,\infty)$, showing necessary and sufficient conditions on the parameters $c$ and $\theta$ to characterize sticky situations (see Theorem \ref{theo:1}) in the case of an absorbing wall ($p=1$), of a diffusive wall ($p=0$) and mixed boundary condition ($0<p<1$). When the lifetime of the process is infinite, we further give some a.s. asymptotics for the behavior of $(\tau_n,\,n\geq 1)$ (see Theorem \ref{theo:2}).
\item[$b)$] Section \ref{sec:3} is dedicated to the construction of an  integrated stable L\'evy  process conditioned to never hit $0$, thus extending the previous results obtained in Jacob \cite{Jacob-13} and in Groeneboom, Jongbloed and Wellner \cite{GJW} for Brownian motion. As a by product, we deduce the asymptotic behavior of the upper tail distribution of $\tau_1$ (Corollary \ref{cor:persistence}), improving the result in Profeta and Simon \cite{ProSim-15}.
\item[$c)$] Section \ref{sec:4} is dedicated to the link between \eqref{eq:ConfinedModel}{, in the case $\theta=1$,} and classical trace problems for kinetic equations. Such link was previously studied in Bossy and Jabir \cite{BoJa-11} for Langevin model driven by a Brownian motion and singular nonlinear (in the sense of McKean-Vlasov) drift component. We show the existence of trace functions under appropriate assumptions on $c,\theta$ and the distribution of $(X_0,U_0)$.
\end{itemize}

\section{Estimation on the asymptotic behavior of $(\tau_n,\,n\in\NN)$ starting from $\{0\}\times (0,+\infty)$}\label{sec:2}

We start by decomposing the paths of $X$ into a sum of excursions. To simplify the expressions, we shall assume that  a.s. $X_0=0$ and $U_0>0$.\\
For $n\geq 1$, we define the restarting velocity after the $n^\text{th}$ passage time at the boundary :
\begin{equation}
\label{eq:Vnsansgn}
V_n =U_{\tau_n}= (1-\beta_n) \theta^n M_n + \beta_n c\,  |U_{\tau_n^-}|\qquad\text{and}\qquad V_0=U_0.
\end{equation}
Observe that by the scaling property of $L$, for $n\geq1$ :
\begin{equation}\label{eq:scaling}
(\tau_n, U_{\tau_n^-}) \law (\tau_{n-1} +V_{n-1}^\alpha \xi_1,  V_{n-1} \ell_1)
\end{equation}
where the pair $(\xi_1, \ell_1)$ is independent from $V_{n-1}$ and is distributed as $(\tau_1, U_{\tau_1^-})$ when the process $(X,U)$ is started from $(0,1)$. Although the law of the pair $(\xi_1, \ell_1)$ is not explicitly known, we have the following estimates from \cite{ProSim-15}: for $\lambda\geq0$,
\begin{equation}\label{eq:moments}
\EE[\xi_1^\lambda] < +\infty\quad  \Longleftrightarrow\quad \EE[|\ell_1|^{\alpha\lambda}] < +\infty\quad  \Longleftrightarrow\quad \lambda< \frac{1-\rho}{1+\alpha\rho},
\end{equation}
as well as the Mellin's transform,
 \begin{equation}\label{eq:Utauy0}
\EE[|\ell_1|^{\nu-1}]=   \EE_{(0,1)}[|U_{\tau_1^-}|^{\nu-1}]= \frac{\sin (\pi \gamma  \nu)}{\sin\left(\pi \nu (1-\gamma)\right)}
\end{equation}
where $\gamma$ is given by
\begin{equation}\label{gamma}
\gamma = \frac{\alpha(1-\rho)}{1+\alpha}.
\end{equation}
By defining
$$g_n = \sup\{k\leq n-1;\; \beta_k=0\}\qquad \text{and}\qquad M_0=U_0,$$
we deduce by iteration from (\ref{eq:Vnsansgn}) and (\ref{eq:scaling}) that
\begin{equation}\label{eq:Vn}
(V_n, \;n\geq1) \mathop{=}\limits^{(d)} \left( (1-\beta_n) \theta^n M_n + \beta_n \theta^{g_n} M_{g_n} \prod_{i=g_n+1}^n c |\ell_i|,\; n\geq1\right)
\end{equation}
where $(\ell_n, n\geq1)$ are i.i.d. random variables with the same law as $U_{\tau_1^-}$ when $(X,U)$ starts from $(0,1)$. Note that the r.v.'s $(V_n, n\geq1)$ are of course not independent, except if $p=0$.\\

\noindent
The same scaling and the independent increments properties further allow to decompose the passage time $\tau_n$ as :
\begin{equation}\label{eq:taun}
\tau_n= \tau_1+ \sum_{k=2}^{n} \frac{\tau_{k} - \tau_{k-1}}{V_{k-1}^{\alpha}}\, V_{k-1}^\alpha\;
 \mathop{=}\limits^{(d)} \; \xi_1 U_0^\alpha   +\sum_{k=2}^{n} \xi_k V_{k-1}^\alpha
\end{equation}
where $(\xi_k, k\geq1) $ are i.i.d. r.v.'s  with the same law as $\tau_1$ when $(X,U)$ is started from $(0,1)$. Furthermore, for every fixed $n\geq1$, $\xi_k$ is independent from $V_{k-1}$.\\

\noindent
In the following, we shall be interested in the study of
$$\tau_\infty = \lim_{n\rightarrow +\infty} \tau_n = \inf\{t>0;\; (X_t, U_t) = (0,0)\}.$$

\subsection{Absorption in finite time}
\noindent
We start by looking at the conditions under which the particle is absorbed at the boundary in finite time, that is  $\tau_\infty<\infty$ a.s.
 { We also investigate (still in the case $\tau_\infty<\infty$)
the existing moments of $\tau_\infty$, namely we study for which
$\lambda>0$ we have $\EE[\tau^\lambda_\infty]<\infty$. Noting that $\EE[\tau_1]=\EE[\tau_\infty]=\EE[|l_1|^\alpha]=+\infty$, only the case $\lambda\in(0,1)$ is of interest.}
\begin{thm}\label{theo:1}
Assume that $X_0=0$ and $U_0>0$ with $U_0^{\alpha}$  integrable. Then we have the following situations:
\begin{enumerate}
\item  If $p=1$, then
$$\tau_\infty < \infty \quad\PP-\text{a.s.}\quad \Longleftrightarrow \quad c<c_{\text{crit}}$$
where
$$c_{\text{crit}}=\exp\left(- \pi\cot\left(\pi\gamma\right)\right).$$

In particular, for $0<\lambda<1$,
$$\EE[\tau_\infty^\lambda]<+\infty  \quad \Longleftrightarrow \quad  \left\{c <  c_{\text{crit}}  \text{ and } c^{\alpha\lambda}  \EE\left[|\ell_1|^{\alpha\lambda}\right]<1\right\}.$$
\item If $p=0$, then
$$\tau_\infty < \infty  \quad\PP-\text{a.s.}\quad \Longleftrightarrow \quad \theta<1. $$
In particular, for $0<\lambda<1$,
$$ \EE[\tau_\infty^\lambda]<+\infty  \quad \Longleftrightarrow \quad  \left\{\theta<1 \text{ and }  \lambda < \frac{1-\rho}{1+\alpha \rho}\right\}$$
\item If $0<p<1$, then
$$\tau_\infty < \infty \quad\PP-\text{a.s.} \quad \Longleftrightarrow \quad \theta<1. $$
In particular, for $0<\lambda<1$,
$$\EE[\tau_\infty^\lambda]<+\infty  \quad \Longleftrightarrow \quad \left\{\theta<1 \text{ and } c^{\alpha\lambda}  \EE\left[|\ell_1|^{\alpha\lambda}\right] p<1\right\}.$$
\end{enumerate}
\end{thm}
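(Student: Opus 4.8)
\emph{Strategy.} The plan is to work from the series representation $\tau_\infty=\sum_{n\geq1}\xi_nV_{n-1}^\alpha$ obtained by letting $n\to\infty$ in \eqref{eq:taun}: since the terms are non-negative, everything reduces to deciding when this series converges almost surely and when $\EE[\tau_\infty^\lambda]<\infty$. Two remarks will be used throughout. First, by \eqref{eq:moments} the exponent $\lambda^\ast:=\frac{1-\rho}{1+\alpha\rho}$ is $<1$, so only $\lambda<1$ ever occurs, for which one has $\big(\sum_na_n\big)^\lambda\leq\sum_na_n^\lambda$ and (by Jensen) $\EE[U_0^{\alpha\lambda}]\leq\EE[U_0^\alpha]^\lambda<\infty$; second, $\xi_n$ is independent of $V_{n-1}$ for each fixed $n$. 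Before the case analysis I would identify $c_{\text{crit}}$: differentiating the Mellin transform \eqref{eq:Utauy0} at $\nu=1$ (where it equals $1$) and using $\cot(\pi(1-\gamma))=-\cot(\pi\gamma)$ gives $\EE[\log|\ell_1|]=\pi\cot(\pi\gamma)$, i.e.\ $\log(c/c_{\text{crit}})=\EE[\log(c|\ell_1|)]$; \eqref{eq:Utauy0} also shows that $|\ell_1|$ has a finite logarithmic moment.

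\emph{The case $p=1$.} Here \eqref{eq:Vn} reads $V_n=U_0c^n\prod_{i=1}^n|\ell_i|$, so $(\log V_n)$ is a random walk with drift $\log(c/c_{\text{crit}})$. If $c<c_{\text{crit}}$, the strong law gives $V_n\leq e^{-\delta n}$ eventually for some $\delta>0$, while a Borel--Cantelli estimate using $\EE[\xi_1^{\lambda_0}]<\infty$ for some $\lambda_0\in(0,\lambda^\ast)$ gives $\xi_n\leq e^{\delta n/2}$ eventually, hence the series converges a.s. If $c\geq c_{\text{crit}}$, then $\limsup_nV_n=+\infty$ a.s.\ (by the strong law when the drift is positive, by recurrence of centred random walks when it vanishes; the degenerate case $|\ell_1|\equiv1$ forces $c_{\text{crit}}=1$ and is trivial), so L\'evy's conditional Borel--Cantelli lemma applies to $A_n=\{\xi_nV_{n-1}^\alpha>1\}$: along the infinitely many $n$ with $V_{n-1}\geq1$ one has $\PP(A_n\mid\mathcal F_{n-1})\geq\PP(\xi_1>1)>0$, so $A_n$ occurs infinitely often and $\tau_\infty=\infty$ a.s. For the moments, set $a:=c^{\alpha\lambda}\EE[|\ell_1|^{\alpha\lambda}]$. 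If $a<1$ then necessarily $\lambda<\lambda^\ast$ and subadditivity gives $\EE[\tau_\infty^\lambda]\leq\EE[U_0^{\alpha\lambda}]\EE[\xi_1^\lambda]\sum_{k\geq1}a^{k-1}<\infty$ (and a posteriori $c<c_{\text{crit}}$, by the previous step). If $a>1$ (or $\EE[|\ell_1|^{\alpha\lambda}]=\infty$), a single term already diverges, since $\EE[(\xi_kV_{k-1}^\alpha)^\lambda]=\EE[U_0^{\alpha\lambda}]\EE[\xi_1^\lambda]a^{k-1}$. In the critical case $a=1$, I would use the scaling fixed-point identity $\tau_\infty\law U_0^\alpha Y$ with $Y\law\xi_1+(c|\ell_1|)^\alpha Y'$, $Y'$ an independent copy of $Y$: assuming $\EE[Y^\lambda]<\infty$, the bound $(\xi_1+Ay)^\lambda\geq A^\lambda y^\lambda$ together with $\EE[A^\lambda]=1$ would force $(\xi_1+AY')^\lambda=(AY')^\lambda$ a.s., i.e.\ $\xi_1=0$ a.s., which is impossible.

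\emph{The cases $p=0$ and $0<p<1$.} Since $\PP(\beta_n=0)=1-p>0$, the diffusive times $N_0:=0<N_1<N_2<\cdots$ are a.s.\ infinitely many; regrouping the series over the blocks $(N_j,N_{j+1}]$ and inserting \eqref{eq:Vn} (with $M_{N_0}:=U_0$) should give
$$\tau_\infty\law\sum_{j\geq0}\theta^{\alpha N_j}M_{N_j}^\alpha\,W_j,\qquad W_j:=\sum_{n=N_j+1}^{N_{j+1}}\xi_n\Big(\prod_{i=N_j+1}^{n-1}c|\ell_i|\Big)^{\alpha},$$
where the block lengths $N_{j+1}-N_j$ are i.i.d.\ geometric, the $W_j$ are i.i.d., the $M_{N_j}$ are i.i.d., and these families are suitably independent. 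If $\theta\geq1$, then $\theta^{\alpha N_j}\geq1$ so the $j$-th term dominates the i.i.d.\ positive variable $M_{N_j}^\alpha W_j$, which exceeds a fixed $\varepsilon>0$ infinitely often by Borel--Cantelli; hence $\tau_\infty=\infty$ a.s., which gives the necessity of $\theta<1$. If $\theta<1$, I would choose $\lambda\in(0,\lambda^\ast)$ so small that $q:=pc^{\alpha\lambda}\EE[|\ell_1|^{\alpha\lambda}]<1$ (possible because $q\to p<1$, or $q\to0$ if $p=0$, as $\lambda\to0$); then $\EE[W_0^\lambda]\leq\EE[\xi_1^\lambda]\sum_{m\geq1}q^{m-1}<\infty$, $\EE[\theta^{\alpha\lambda N_j}]=r^j$ with $r=\frac{(1-p)\theta^{\alpha\lambda}}{1-p\theta^{\alpha\lambda}}<1$, and independence gives $\sum_j\EE[(\theta^{\alpha N_j}M_{N_j}^\alpha W_j)^\lambda]<\infty$; since $\lambda<1$ this forces almost sure convergence of the series. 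The moment statements follow the same scheme: the upper bound is the same $\lambda$-th power estimate, now requiring exactly $pc^{\alpha\lambda}\EE[|\ell_1|^{\alpha\lambda}]<1$ (together with $\theta<1$); for the lower bound one uses $\tau_\infty\geq U_0^\alpha W_0$ and, conditioning on $\{N_1=m\}$, $\EE[W_0^\lambda]\geq(1-p)\EE[\xi_1^\lambda]\sum_{m\geq1}(pc^{\alpha\lambda}\EE[|\ell_1|^{\alpha\lambda}])^{m-1}$, which is infinite as soon as $pc^{\alpha\lambda}\EE[|\ell_1|^{\alpha\lambda}]\geq1$ — so the summation over the geometric block lengths already absorbs the critical case. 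For $p=0$ the blocks have length $1$, $W_j=\xi_{j+1}$, and the criterion collapses to $\theta<1$ and $\lambda<\lambda^\ast$, finiteness of $\EE[M_1^{\alpha\lambda}]$ being automatic from the moment assumption on $M_1$.

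\emph{Expected main obstacle.} The delicate case will be $0<p<1$: one has to recognise that convergence is driven by the geometric decay of $\theta^{\alpha N_j}$ in the block index (not by the products $\prod c|\ell_i|$, which may well grow), and to set up the block/renewal decomposition carefully enough to legitimise the independence used in the moment bounds. Identifying $c_{\text{crit}}$ via the logarithmic derivative of \eqref{eq:Utauy0} and handling the critical exponent $a=1$ when $p=1$ are the other points requiring some care.
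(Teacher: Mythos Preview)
Your argument is essentially correct and, in several places, takes a genuinely different and more elementary route than the paper.

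For the a.s.\ dichotomy when $p=1$ you and the paper both use the random walk $\sum\log(c|\ell_i|)$ and the SLLN; the paper packages the convergence step through its Lemma~\ref{lem:1}, while you control $\xi_n$ separately by Borel--Cantelli. (One cosmetic point: your choice $\xi_n\leq e^{\delta n/2}$ with the \emph{same} $\delta$ as in $V_n\leq e^{-\delta n}$ only closes if $\alpha>1/2$; you should pick the growth rate of $\xi_n$ independently small, which Borel--Cantelli allows.) For $c=c_{\text{crit}}$ your conditional Borel--Cantelli is if anything more careful than the paper's one-line ``$V_n\not\to0$ hence $\tau_\infty=\infty$''.

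The real divergence is in the moment analysis. The paper invokes Goldie's implicit renewal theorem to obtain the exact tail $\PP(\tau_\infty>t)\sim\kappa t^{-\eta(c)}$ (and later reuses this in Section~\ref{sec:3}), while you stay elementary: subadditivity for the upper bound, single-term blow-up when $a>1$, and the fixed-point contradiction $\EE[(\xi_1+AY')^\lambda]=\EE[(AY')^\lambda]\Rightarrow\xi_1=0$ at the critical exponent $a=1$. Note that this last step only needs $A$ independent of $Y'$, not of $\xi_1$, so the possible dependence between $\xi_1$ and $\ell_1$ is harmless.

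For $0<p<1$ the paper works backward via $g_k=\sup\{j\leq k-1:\beta_j=0\}$ and the explicit sum of Lemma~\ref{lem:sumgn}, and for the lower bound on moments introduces an auxiliary variable $\chi_\infty$ solving a random recursion to which Goldie is applied again. Your forward block decomposition at the diffusive times $N_j$ is the dual viewpoint and is arguably cleaner: the i.i.d.\ renewal structure makes the upper bound $\EE[\tau_\infty^\lambda]\leq\sum_j\EE[\theta^{\alpha\lambda N_j}]\,\EE[M_1^{\alpha\lambda}]\,\EE[W_0^\lambda]$ transparent, and your lower bound $\EE[W_0^\lambda]\geq(1-p)\EE[\xi_1^\lambda]\sum_{m\geq1}q^{m-1}$ (from $W_0\geq\xi_{L_0}\prod_{i<L_0}(c|\ell_i|)^\alpha$ and $\PP(L_0=m)=p^{m-1}(1-p)$) disposes of the critical case $q=1$ without any appeal to Goldie. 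The trade-off is that the paper's route yields the precise tail exponent, which you do not recover; for the purposes of Theorem~\ref{theo:1} alone your approach is sufficient and self-contained.

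The independence bookkeeping you flag as the main obstacle is manageable: with $(\beta_n,M_n,\xi_n,\ell_n)_{n\geq1}$ i.i.d., the blocks $(N_j,N_{j+1}]$ are i.i.d.; $W_j$ is measurable with respect to block $j$, while $(N_j,M_{N_j})$ is measurable with respect to blocks $0,\dots,j-1$, and $N_j$ and $M_{N_j}$ are themselves independent because $M$ is independent of $\beta$. That is all you need.
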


\noindent
The proof of this theorem will rely on the following lemma, which is a special case of Kolmogorov's three-series theorem (see e.g. Durrett \cite[p.64]{Durret-96}).
\begin{lem}\label{lem:1}
Let $(X_n, n\geq0)$ be i.i.d. and positive random variables. We assume that there exists $\lambda\in(0,1)$ such that $0<\EE[X_1^\lambda]<+\infty$. Then, for $a\geq0$ :
$$\sum_{n=1}^{+\infty} X_n a^n  < +\infty\quad a.s.    \qquad \Longleftrightarrow \qquad a<1.$$
\end{lem}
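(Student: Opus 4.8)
The plan is to prove the two implications separately by an elementary truncation/power argument; throughout write $Y_n = X_n a^n \geq 0$.

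\textbf{The "if" direction ($a<1 \Rightarrow$ convergence).} The key elementary fact is that, since $\lambda\in(0,1)$, one has $t\leq t^\lambda$ for all $t\in[0,1]$. First I would control the $\lambda$-th moments: by the i.i.d.\ hypothesis, $\EE[Y_n^\lambda]=a^{n\lambda}\,\EE[X_1^\lambda]$, and since $a^\lambda\in[0,1)$ the series $\sum_{n\geq1}\EE[Y_n^\lambda]=\EE[X_1^\lambda]\sum_{n\geq1}a^{n\lambda}$ converges. By Tonelli's theorem, $\EE\big[\sum_{n\geq1}Y_n^\lambda\big]<+\infty$, so $\sum_{n\geq1}Y_n^\lambda<+\infty$ a.s.; in particular $Y_n^\lambda\to0$, hence $Y_n\to0$ a.s., so there is an a.s.\ finite random $N$ with $Y_n\leq1$ for all $n\geq N$. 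For such $n$ we then have $Y_n\leq Y_n^\lambda$, so $\sum_{n\geq N}Y_n\leq\sum_{n\geq N}Y_n^\lambda<+\infty$, and therefore $\sum_{n\geq1}Y_n<+\infty$ a.s.

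\textbf{The "only if" direction ($a\geq1 \Rightarrow$ divergence).} Here $Y_n\geq X_n$ for every $n$, so it suffices to show $\sum_{n\geq1}X_n=+\infty$ a.s. Since $\EE[X_1^\lambda]>0$, the variable $X_1$ is not a.s.\ zero, so there is $\varepsilon>0$ with $q:=\PP(X_1>\varepsilon)>0$. The events $\{X_n>\varepsilon\}$ are independent with $\sum_n\PP(X_n>\varepsilon)=\sum_n q=+\infty$, so the second Borel--Cantelli lemma gives $X_n>\varepsilon$ for infinitely many $n$, almost surely; hence $\sum_{n\geq1}Y_n\geq\sum_{n\geq1}X_n=+\infty$ a.s.

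The proof is short, and there is no serious obstacle; the one point that must be handled with care is the direction of the inequality $t\leq t^\lambda$, which holds on $[0,1]$ precisely because $\lambda<1$, and this is exactly what allows the passage from summability of $\sum Y_n^\lambda$ (immediate from the finite $\lambda$-moment) to summability of $\sum Y_n$. The hypothesis $\EE[X_1^\lambda]>0$ is used only to rule out the degenerate case $X_1\equiv0$ in the divergent regime. If one preferred to follow the stated hint literally, one would instead apply Kolmogorov's three-series theorem to the truncations $Y_n\wedge1$ — condition (i) then amounts to $\sum_n\PP(X_n a^n>1)<+\infty$ and conditions (ii)--(iii) to convergence of $\sum_n\EE[(X_na^n)\wedge1]$ and of the associated variance series, all of which follow from the same moment bound when $a<1$ and fail when $a\geq1$ — but the power trick above avoids this bookkeeping.
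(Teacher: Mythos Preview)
Your proof is correct. For the convergence direction both you and the paper exploit the finiteness of $\sum_n a^{n\lambda}\EE[X_1^\lambda]$; the paper does it in one line via the subadditivity inequality $(\sum_n Y_n)^\lambda\leq\sum_n Y_n^\lambda$, which immediately gives $\EE[(\sum_n Y_n)^\lambda]<\infty$, whereas you first obtain $\sum_n Y_n^\lambda<\infty$ a.s.\ and then pass to $\sum_n Y_n$ via the pointwise bound $t\le t^\lambda$ on $[0,1]$ after a random truncation --- same moment input, slightly more bookkeeping. For the divergence direction the arguments genuinely differ: the paper bounds the Laplace transform by $\EE[e^{-\lambda X_1}]^N\to0$ (using $a\ge1$ and independence), while you invoke the second Borel--Cantelli lemma on $\{X_n>\varepsilon\}$. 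Both are elementary and of comparable length; your Borel--Cantelli route makes the role of the hypothesis $\EE[X_1^\lambda]>0$ (i.e.\ $X_1\not\equiv0$) more explicit.
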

\begin{proof}

If $a<1$,
$$\EE\left[\left(\sum_{n=1}^{+\infty} X_n a^n \right)^\lambda\right] \leq \sum_{n=1}^{+\infty} \EE\left[X_n^\lambda\right]a^{\lambda n} = %
\frac{a^\lambda\EE\left[X_1^\lambda\right]}{1-a^\lambda}<+\infty$$
which implies that the series converges a.s.
If $a\geq 1$, since all the components are positive, we have
$$\EE\left[\exp\left(- \sum_{n=1}^{\infty} X_n a^n  \right)\right]\leq \EE\left[\exp\left(- \sum_{n=1}^{N} X_n a^n  \right)\right] \leq \EE\left[\exp\left(- X_1  \right)\right]^N \xrightarrow[N\rightarrow \infty]{} 0$$
which concludes the proof.

\end{proof}

\begin{rem}
Note that we cannot totally remove the assumption on the moments in the previous lemma. Indeed, consider for instance a sequence of positive i.i.d. random variables with distribution :
$$\PP(X_1\in dx) = \frac{\ln(2)}{x \ln^2(x)} \ind_{\{x\geq 2\}} dx.$$
Then, integrating by parts,
\[
\EE\left[\exp\left(-\lambda X_1 a^n  \right)\right] =  \int_2^{+\infty} e^{-\lambda a^n x}\frac{\ln(2)}{x \ln^2(x)} dx= e^{- 2 \lambda a^n } - \lambda a^n \int_2^{+\infty} e^{-\lambda a^n x} \frac{\ln(2)}{\ln(x)}dx.
\]

Taking $a<1$, we deduce by the Tauberian theorem that :
$$1-\EE\left[\exp\left(-\lambda X_1 a^n  \right)\right]  \mathop{\sim}_{n\rightarrow +\infty}  -\frac{\ln(2)}{n\ln(a)}$$

hence
$$\EE\left[\exp\left(-\lambda \sum_{n=1}^{N} X_n a^n  \right)\right]
=  \prod_{n=1}^N  \EE\left[\exp\left(-\lambda X_1 a^n  \right)\right]\xrightarrow[N\rightarrow +\infty]{} 0
$$
which proves that $\displaystyle \sum_{n=1}^{+\infty} X_n a^n =+\infty \; a.s.$ for any $a>0$ in this case.

\end{rem}
\noindent
\begin{proof} We shall now prove Theorem \ref{theo:1} and deal with the three cases separately.\\

\noindent
1. Assume first that $p=1$. We follow the ideas of Jacob \cite[Corollary 1]{Jacob-12}. According to \eqref{eq:Vn}, we have the decomposition $$V_n\; \mathop{=}\limits^{(d)} \; U_0 \,\prod_{i=1}^n  c |\ell_i|.$$
Therefore, from (\ref{eq:taun}),
$$\tau_\infty  \;\mathop{=}\limits^{(d)}\;  \xi_1 U_0^{\alpha}+ U_0^\alpha \sum_{n=2}^{+\infty} \xi_n     c^{\alpha (n-1)}  \prod_{i=1}^{n-1} |\ell_i|^\alpha. $$
Next, from Profeta and Simon \cite{ProSim-16}, the law of large numbers implies that :
$$ \frac{1}{n} \sum_{i=1}^{n-1} \ln(|\ell_i|) \xrightarrow[n\rightarrow +\infty]{\text{a.s.}}\EE\left[\ln(|\ell_1|)\right]=  \pi\cot\left(\pi\gamma\right) =-\ln(c_{\text{crit}})$$
hence, for any fixed $\varepsilon>0$, we have for $n$ large enough
\begin{equation}\label{eq:lncrit}
c_{\text{crit}}^{-n+n\varepsilon} \leq \prod_{i=1}^{n-1}  |\ell_i| \leq c_{\text{crit}}^{-n-n\varepsilon},
\end{equation}
and the finiteness of $\tau_\infty$ follows from (\ref{eq:moments}) and Lemma \ref{lem:1} when $c<c_{\text{crit}}$. In the same way, Lemma \ref{lem:1} implies that $\tau_\infty=\infty$ a.s. whenever $c>c_{\text{crit}}$. When $c=c_{\text{crit}}$, observe that the random walk $Z_n = \sum_{i=1}^{n}\ln(|\ell_i|)+\ln(c_{\text{crit}})$ is recurrent, hence the restarting velocity $V_n=U_0\exp(Z_n)$ does not converge to 0, which implies that $\tau_\infty=\infty$ a.s.\\
To get the condition on the moments, we then notice that $\tau_\infty$ is solution of a renewal equation :

\begin{equation}
\tau_\infty  \;\mathop{=}\limits^{(d)}\;   \xi_1 U_0^{\alpha}+   c^\alpha |\ell_1|^\alpha \tau_\infty .
\label{eq:10bis}
\end{equation}
From Goldie \cite[Theorem 4.1]{Gol}, we deduce that there exists a constant $\kappa>0$ such that :
$$\PP(\tau_\infty>t) \equi_{t\rightarrow +\infty}   \frac{\kappa}{t^{\eta(c)}}$$
where $\eta(c)>0$ is such that  $c^{\alpha\eta(c)}  \EE\left[|\ell_1|^{\alpha\eta(c)}\right]=1$. In particular, $\EE[\tau^\lambda_\infty]<\infty$ if and only if $\lambda<\eta(c)$. Point 1. then follows from the fact that for $c<c_{\text{crit}}$, the function $\lambda \rightarrow c^{\alpha\lambda}  \EE\left[|\ell_1|^{\alpha\lambda}\right]$ is convex with a negative derivative at $0^+$ given by $\alpha(\ln(c)-\ln(c_{\text{crit}}))<0$.

\noindent
2. Assume now that $p=0$. Then, the particle will always restart afresh when hitting the zero axis, namely
$V_n= \theta^n M_n  $ for $n\geq1$. Hence
$$
\tau_\infty \law  \xi_1U_0^{\alpha} + \sum_{n=2}^{+\infty} \xi_n \theta^{\alpha (n-1)} M_{n-1}^\alpha
$$
and from Lemma \ref{lem:1}, this series converges if and only if $\theta<1$, in which case the moments of $\tau_\infty$ are finite if and only if those of $\xi_1$ are (since $U_0^\alpha$ and $M_1^\alpha$ are assumed to be integrable).\\

\noindent
3. Assume finally that $0<p<1$. Observe first that since the r.v.'s $(\beta_k, k\geq1)$ only take the values 0 or 1 and all the terms are positive, $\tau_n$ may be decomposed, after a change of indices, as :
$$\tau_n \law   \xi_1U_0^{\alpha}+  \sum_{k=1}^{n-1} \xi_{k+1} (1-\beta_{k}) (\theta^{k} M_{k})^\alpha +  \sum_{k=1}^{n-1} \xi_{k+1} \beta_{k} \left(\theta^{g_{k}} M_{g_{k}} \prod_{i=g_{k}+1}^{k}  c |\ell_i|\right)^\alpha$$
with the convention that $M_0=U_0$.
Since in this case the r.v.'s $(\beta_k, k\geq1)$ take infinitely many often the value 0, we first  observe by Lemma \ref{lem:1} that $\tau_\infty = \infty$ as soon as $\theta\geq1$.

So assuming that $\theta<1$ and taking $0<\lambda<\frac{1-\rho}{1+\alpha\rho} $, we then have :

\begin{align*}
\EE[\tau_n^\lambda]& \leq \EE[\xi_1^\lambda U_0^{\lambda \alpha}  ]+  \sum_{k=1}^{n-1} \EE\left[(1-\beta_k) \left( \xi_{k+1}\theta^{\alpha k} M_k^\alpha\right)^{\lambda}\right] +\sum_{k=1}^{n-1} \EE\left[\beta_k\left(\xi_{k+1}  \theta^{\alpha g_k} M_{g_k}^\alpha \prod_{i=g_k+1}^k c^\alpha| \ell_i|^\alpha\right)^\lambda\right]\\
&\leq\EE[\xi_1^\lambda]\left( \EE[U_0^{\lambda \alpha}  ]+(1-p) \EE[M_1^{\alpha\lambda}]\frac{\theta^{\alpha\lambda} -\theta^{\alpha\lambda n} }{1-\theta^{\alpha\lambda}}\right) +     p \EE\left[\xi_1^\lambda\right] \sum_{k=1}^{n-1}   \EE\left[\theta^{\alpha g_k\lambda} M_{g_k}^{\alpha\lambda} \prod_{i=g_k+1}^k c^{\alpha\lambda} |\ell_i|^{\alpha\lambda}\right].
\end{align*}

In the following Lemma, we compute the remaining sum ($\sum_{k=1}^{n-1}(\dots)$) under the more general assumption that $\theta\neq1$, as the result for $\theta>1$ will be needed during the proof of Theorem 2.2.1.

\begin{lem}\label{lem:sumgn}
For $n\geq 2$, $\theta\neq1$ and $0<\lambda <  \frac{1-\rho}{1+\alpha\rho}$:
\begin{multline*}
\sum_{k=1}^n \EE\left[\theta^{\alpha g_k\lambda} M_{g_k}^{\alpha\lambda} \prod_{i=g_k+1}^k c^{\alpha\lambda} |\ell_i|^{\alpha\lambda}\right] \\=
\frac{1}{p}\EE[U_0^{\alpha\lambda}] \left( c^{\alpha\lambda}\EE\left[ |\ell_1|^{\alpha\lambda}\right] p\right)^n
+\frac{1}{p} \sum_{i=1}^{n-1}  \left(c^{\alpha\lambda }\EE\left[ |\ell_1|^{\alpha\lambda}\right] p\right)^i \left(
\EE[U_0^{\alpha\lambda}]  + (1-p)  \theta^{\alpha \lambda} \EE[M_1^{\alpha\lambda}] \frac{\theta^{\alpha\lambda (n-i)} - 1 }{ \theta^{\alpha \lambda}-1}\right).\end{multline*}
In particular, for $\lambda$ small enough such that $c^{\alpha\lambda}  \EE\left[|\ell_1|^{\alpha\lambda}\right] p<1$, there exist two constants $A_\lambda$, $B_\lambda$ independent of $n$ such that
$$ \EE\left[\tau_n^\lambda\right] \leq A_\lambda \theta^{\alpha \lambda n} +B_\lambda.$$

\end{lem}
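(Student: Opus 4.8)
The plan is to evaluate each summand by conditioning on $g_k$, and then to rearrange the resulting double sum; the finiteness of every expectation that appears along the way will be supplied by the standing hypothesis $\lambda<\frac{1-\rho}{1+\alpha\rho}$ through \eqref{eq:moments}, together with the assumed integrability of $U_0^\alpha$ and $M_1^\alpha$. So, fix $k\geq1$. Since $g_k=\sup\{j\leq k-1;\ \beta_j=0\}$ and the $\beta_j$'s are i.i.d.\ Bernoulli$(p)$, one has $\PP(g_k=j)=(1-p)p^{k-1-j}$ for $1\leq j\leq k-1$, while $\PP(g_k=0)=p^{k-1}$ (the event $\beta_1=\dots=\beta_{k-1}=1$, on which $M_{g_k}=M_0=U_0$ by convention). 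Conditionally on $\{g_k=j\}$, the variables $M_j$ and $\ell_{j+1},\dots,\ell_k$ are independent of $\sigma(\beta_1,\dots,\beta_{k-1})$ and mutually independent; taking expectations and writing $a:=c^{\alpha\lambda}\,\EE[|\ell_1|^{\alpha\lambda}]$ this gives
\[
\EE\!\left[\theta^{\alpha g_k\lambda}\,M_{g_k}^{\alpha\lambda}\prod_{i=g_k+1}^k c^{\alpha\lambda}|\ell_i|^{\alpha\lambda}\right]
=\EE[U_0^{\alpha\lambda}]\,a^k p^{k-1}+(1-p)\,\EE[M_1^{\alpha\lambda}]\sum_{j=1}^{k-1}\theta^{\alpha j\lambda}\,a^{k-j}p^{k-1-j}.
\]

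Next I sum this identity for $k=1,\dots,n$, peel off the term $k=n$ of the first contribution, and exchange the order of summation in the double sum through the substitution $j\mapsto k-j$; the inner sum then becomes a finite geometric series $\sum_{l=1}^{n-j}\theta^{\alpha\lambda l}=\theta^{\alpha\lambda}\bigl(\theta^{\alpha\lambda(n-j)}-1\bigr)/\bigl(\theta^{\alpha\lambda}-1\bigr)$, which is legitimate because $\theta\neq1$. Collecting the pieces reproduces the announced formula; the only delicate points are the boundary cases $k=1$ and $g_k=0$, which are absorbed by the conventions $g_1=0$, $M_0=U_0$ and $\theta^0=1$.

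For the final assertion I insert the formula just proved (and the trivial estimate $\sum_{k=1}^{n-1}\le\sum_{k=1}^{n}$) into the upper bound for $\EE[\tau_n^\lambda]$ displayed immediately before the lemma, which also carries the finite factor $\EE[\xi_1^\lambda]$ (finite by \eqref{eq:moments}). Under the hypothesis $ap=c^{\alpha\lambda}\EE[|\ell_1|^{\alpha\lambda}]p<1$ every series in $(ap)^j$ that arises converges and is bounded uniformly in $n$; the terms carrying no factor $\theta^{\alpha\lambda n}$ then sum to a constant $B_\lambda$, whereas the terms proportional to $\theta^{\alpha\lambda n}$ — coming from $\theta^{\alpha\lambda}-\theta^{\alpha\lambda n}$ and, when $\theta>1$, from $(\theta^{\alpha\lambda(n-j)}-1)/(\theta^{\alpha\lambda}-1)$ after summing $\sum_j(ap\,\theta^{-\alpha\lambda})^j$ (note that $ap<1$ and $\theta^{\alpha\lambda}>1$ force $ap\,\theta^{-\alpha\lambda}<1$) — contribute a term $A_\lambda\,\theta^{\alpha\lambda n}$. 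In particular, when $\theta<1$ one may take $A_\lambda=0$.

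There is no genuine conceptual difficulty here: the argument is pure bookkeeping, the sensitive points being (i) identifying the law of $g_k$, (ii) using the mutual independence of the families $(\beta_j)$, $(\ell_j)$, $(M_j)$ correctly when conditioning, and (iii) carrying out the exchange of summations together with the geometric summations without index errors. The constraint $\lambda<\frac{1-\rho}{1+\alpha\rho}$ intervenes only to make $\EE[|\ell_1|^{\alpha\lambda}]$ and $\EE[\xi_1^\lambda]$ finite, so that all the manipulations above are justified.
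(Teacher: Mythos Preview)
Your proof is correct and follows essentially the same route as the paper: identify the law of $g_k$, split the expectation according to $\{g_k=0\}$ and $\{g_k=j\}$ using independence, then exchange the order of summation and sum the resulting geometric series. Your treatment of the bound $\EE[\tau_n^\lambda]\le A_\lambda\theta^{\alpha\lambda n}+B_\lambda$ is in fact more explicit than the paper's, which merely states the conclusion.
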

\begin{proof}
The law of $g_k$ is given by :
\begin{equation}\label{gn}
\PP(g_k=l) = (1-p) p^{k-l-1} \text{ for } l\in \{1, \ldots, k-1\}\qquad \text{ and }\qquad \PP(g_k=0) = p^{k-1}.
\end{equation}
We decompose the expectation according to $g_k$. When $g_k=0$ :
\begin{align}\label{Etaugn0}
 \EE\left[\theta^{\alpha g_k\lambda} M_{g_k}^{\alpha\lambda} \prod_{i=g_k+1}^kc^{\alpha\lambda} |\ell_i|^{\alpha\lambda}\ind_{\{g_k=0\}}\right] &= \EE[U_0^{\alpha\lambda}]c^{\alpha\lambda k}\EE\left[ |\ell_1|^{\alpha\lambda}   \right]^k  p^{k-1}.
\end{align}
 When $g_k>0$, similar computations yield :
\begin{align}
\notag \EE\left[\theta^{\alpha g_k\lambda} M_{g_k}^{\alpha\lambda} \prod_{i=g_k+1}^k c^{\alpha\lambda} |\ell_i|^{\alpha\lambda}\ind_{\{g_k>0\}}\right]&= \sum_{l=1}^{k-1} \EE\left[\theta^{\alpha l \lambda} M_{l}^{\alpha\lambda} \prod_{i=l+1}^kc^{\alpha\lambda} |\ell_i|^{\alpha\lambda}\right](1-p)p^{k-l-1}\\
\label{Etaugn} &= (1-p) \EE[M_{1}^{\alpha\lambda}]  \sum_{i=1}^{k-1}  \theta^{\alpha \lambda (k-i)} c^{\alpha\lambda i} \EE\left[ |\ell_1|^{\alpha\lambda}\right]^{i} p^{i-1}.
 \end{align}
Applying Fubini-Tonelli's theorem, we deduce that
\begin{align}
\sum_{k=2}^{n} \sum_{i=1}^{k-1}  \theta^{\alpha \lambda (k-i)}
c^{\alpha\lambda i} \EE\left[ |\ell_1|^{\alpha\lambda}\right]^{i} p^{i-1}
\label{theta1} &= \sum_{i=1}^{n-1} \theta^{- \alpha \lambda i}
 c^{\alpha\lambda i}\EE\left[ |\ell_1|^{\alpha\lambda}\right]^{i} p^{i-1}
  \sum_{k=i+1}^{n}  \theta^{k \alpha \lambda} \\
\notag &= \theta^{\alpha\lambda} \sum_{i=1}^{n-1}  c^{\alpha\lambda i} \EE\left[|\ell_1|^{\alpha\lambda}\right]^{i} p^{i-1} \frac{1 -\theta^{\alpha\lambda(n-i)} }{1-\theta^{\alpha\lambda}}
\end{align}
which concludes the proof of Lemma \ref{lem:sumgn}.
\end{proof}

\noindent
Now, letting $n\rightarrow +\infty$ in Lemma \ref{lem:sumgn} and using the fact that $\theta<1$, we deduce on the one hand that $\EE[\tau_\infty^\lambda] <+\infty$ as soon as  $\lambda$ is small enough so that $c^{\alpha\lambda}  \EE\left[|\ell_1|^{\alpha\lambda}\right] p<1$. In particular, $\tau_\infty<+\infty$ a.s. for any $c>0$. On the other hand, we have :
$$\EE[\tau_\infty^\lambda] \geq \EE\left[\left( U_0^\alpha \sum_{k=1}^{+\infty} \xi_{k+1} \beta_{k} \left( \prod_{i=1}^{k}  c |\ell_i|\right)^\alpha \ind_{\{g_k=0\}}\right)^\lambda\right].$$
Observe now that the random variable appearing on the right-hand side of the previous equation, say $\chi_\infty$, is a solution of the following renewal equation :
$$\chi_{\infty} \law \xi_2 \beta_1 U_0^{\alpha} c^\alpha|\ell_1|^\alpha \ind_{\{g_1=0\}} + c^\alpha|\ell_1|^\alpha \ind_{\{\beta_1=1\}} \chi_\infty.$$
As in Point 1., Goldie's result \cite[Theorem 4.1]{Gol} implies that there exists a constant $\kappa>0$ such that
$$\PP\left(\chi_\infty>t\right) \equi_{t\rightarrow +\infty} \frac{\kappa}{t^{\lambda_0}}$$
where $\lambda_0>0$ is the solution of
$$1=\EE\left[c^{\alpha\lambda_0}|\ell_1|^{\alpha\lambda_0} \ind^{\lambda_0}_{\{\beta_1=1\}} \right] = c^{\alpha\lambda_0}\EE\left[|\ell_1|^{\alpha\lambda_0} \right] p.$$
This implies that $\EE[\chi_\infty^\lambda]=+\infty$ for $\lambda\geq \lambda_0$, which concludes the proof.
 \end{proof}

\subsection{Asymptotics of $(\tau_n, n\geq1)$ }

We now study the rate of divergence of $\tau_n$ when $\tau_n\xrightarrow[n\rightarrow +\infty]{} +\infty$.

\begin{thm}\label{theo:2}
Assume now that $\tau_\infty=+\infty$ a.s. We have the following asymptotics :
\begin{enumerate}
\item  When $p=1$ :
\begin{enumerate}
\item If $ c>c_{\text{crit}}$ :
$$ \frac{\ln(\tau_n)}{n}\xrightarrow[n\rightarrow +\infty]{a.s.}  \alpha \left(\pi \cot\left(\pi \gamma  \right)+ \ln(c)\right).$$
\item If $c=c_{\text{crit}}$ :  for any  $0<\lambda < 2$, 
$$\frac{\ln(\tau_n)}{n^{1/\lambda}} \xrightarrow[n\rightarrow +\infty]{a.s.}0.$$
\end{enumerate}
\item When $p=0$ :
\begin{enumerate}
\item If $\theta>1$ :
$$ \frac{\ln(\tau_n)}{n}\xrightarrow[n\rightarrow +\infty]{a.s.} \alpha\ln(\theta).$$
\item If $\theta=1$ : for any $0<\lambda < \frac{1-\rho}{1+\alpha\rho}$,
$$\frac{\tau_n}{n^{1/\lambda}} \xrightarrow[n\rightarrow +\infty]{a.s.}0.$$
\end{enumerate}
\item When $0<p<1$ :
\begin{enumerate}
\item If $\theta> 1$ :
$$ \frac{\ln(\tau_n)}{n}\xrightarrow[n\rightarrow +\infty]{a.s.} \alpha\ln(\theta).$$
\item  If $\theta=1$ : for any $\lambda>0$ such that $c^{\alpha\lambda}  \EE\left[|\ell_1|^{\alpha\lambda}\right] p<1$ :
$$\frac{\tau_n}{n^{1/\lambda}} \xrightarrow[n\rightarrow +\infty]{a.s.}0.$$
\end{enumerate}
\end{enumerate}
\end{thm}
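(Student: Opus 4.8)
\textbf{Proof plan for Theorem \ref{theo:2}.}

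The plan is to exploit the additive decomposition \eqref{eq:taun}, namely $\tau_n \law \xi_1 U_0^\alpha + \sum_{k=2}^n \xi_k V_{k-1}^\alpha$, and to track the growth (or decay) of the restarting velocities $V_{k}$ through the representation \eqref{eq:Vn}. In each case the behaviour of $\ln(\tau_n)$ is governed by the largest term in the sum, so the first step is always to sandwich $\tau_n$ between its maximal summand and $n$ times that summand, reducing the problem to the a.s. asymptotics of $\ln(\xi_k V_{k-1}^\alpha) = \ln(\xi_k) + \alpha\ln(V_{k-1})$.

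For the divergent regimes (1a, 2a, 3a): in case $p=1$ we have $V_{n}\law U_0\prod_{i=1}^n c|\ell_i|$, so $\ln(V_n)/n \to \ln c + \EE[\ln|\ell_1|] = \ln c + \pi\cot(\pi\gamma)$ by the strong law of large numbers (using the value of $\EE[\ln|\ell_1|]$ already recorded in Section \ref{sec:2}), which is positive precisely when $c>c_{\text{crit}}$; one then checks $\ln(\xi_k)/k\to 0$ a.s., which follows from the Borel--Cantelli lemma since $\xi_1$ has some finite positive moment by \eqref{eq:moments}. Combining, $\ln(\tau_n)/n \to \alpha(\ln c + \pi\cot(\pi\gamma))$. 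For $p=0$ (case 2a) the $V_n = \theta^n M_n$ are essentially deterministic up to the i.i.d. factor $M_n$, so $\ln(V_n)/n \to \ln\theta$ by the same Borel--Cantelli argument applied to $M_n$, giving $\ln(\tau_n)/n\to\alpha\ln\theta$. Case 3a with $0<p<1$ and $\theta>1$ is handled by noting that the ``fresh-restart'' terms $(1-\beta_k)(\theta^k M_k)^\alpha$ appear infinitely often and dominate: the largest such term already forces $\liminf \ln(\tau_n)/n \geq \alpha\ln\theta$, while an upper bound comes from observing that every summand $\xi_{k+1}V_k^\alpha$ is at most of exponential order $\theta^{\alpha k}$ times sub-exponential corrections (the products $\prod c|\ell_i|$ over a block since the last fresh restart, whose length has geometric tails, contribute only $o(n)$ in the exponent by Borel--Cantelli on the block lengths).

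For the critical regimes (1b, 2b, 3b): here the exponential growth rate is zero, so one needs the finer $n^{1/\lambda}$ scaling. In case 1b ($c=c_{\text{crit}}$, $p=1$) write $\ln(V_n) = \ln U_0 + Z_n$ with $Z_n=\sum_{i=1}^n(\ln|\ell_i| + \ln c_{\text{crit}})$ a centered random walk; since $\EE[|\ell_1|^{\alpha\mu}]<\infty$ for $\mu<(1-\rho)/(1+\alpha\rho)$ by \eqref{eq:moments}, the increments of $Z_n$ have finite moments of every order up to (just under) $2$ via $\EE[|\ln|\ell_i||^r]<\infty$ for all $r$, so by the Marcinkiewicz--Zygmund strong law $Z_n/n^{1/\lambda}\to 0$ a.s. for $0<\lambda<2$; since $\ln(\tau_n)\leq \ln n + \max_{k\leq n}\ln(\xi_k V_{k-1}^\alpha)$ and $\ln\xi_k = o(k^{1/\lambda})$ likewise, we conclude $\ln(\tau_n)/n^{1/\lambda}\to 0$. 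In cases 2b and 3b ($\theta=1$) the $V_k$ no longer decay, so $\tau_n$ genuinely grows polynomially; here one controls $\tau_n/n^{1/\lambda}$ directly by bounding $\EE[\tau_n^\lambda]$ --- Lemma \ref{lem:sumgn} (or its $p=0$ specialization) gives $\EE[\tau_n^\lambda]\leq Cn$ for $\lambda$ in the stated range when $\theta=1$ --- and then applying a Borel--Cantelli argument along the subsequence $n_j = 2^j$ together with monotonicity of $n\mapsto\tau_n$ to upgrade convergence in probability to the a.s. statement $\tau_n/n^{1/\lambda}\to 0$.

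The main obstacle I expect is case 3a: unlike $p=1$ or $p=0$, the velocities $V_k$ mix deterministic $\theta$-geometric growth with random $\prod c|\ell_i|$ factors over blocks of random (geometrically distributed) length, so obtaining a matching upper bound for $\ln(\tau_n)/n$ requires showing that these random blocks do not occasionally become long enough to create a summand of order larger than $\theta^{\alpha k}$; this is where a careful Borel--Cantelli estimate on $\max_{k\leq n}(k - g_k)$ (the time since the last fresh restart) is needed, using that $\PP(k-g_k > m) = p^m$ decays geometrically while $\EE[|\ell_1|^{\alpha\lambda}]$ is finite only for small $\lambda$.
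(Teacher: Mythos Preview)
Your strategy is sound and differs from the paper's in instructive ways. For the divergent cases 2(a) and 3(a) you argue pathwise via the sandwich $\max_k \xi_k V_{k-1}^\alpha \le \tau_n \le n\max_k \xi_k V_{k-1}^\alpha$, the SLLN for $\ln V_k$, and (in 3(a)) a Borel--Cantelli bound on the block lengths $k-g_k$; the paper instead treats 2(a) and 3(a) uniformly by a two-sided Chernoff bound
\[
\PP\!\left(\left|\tfrac{\ln\tau_n}{n} - \alpha\ln\theta\right|>\varepsilon\right)\le e^{-\lambda\varepsilon n}\bigl(\EE[\tau_n^\lambda\theta^{-\alpha\lambda n}]+\EE[\tau_n^{-\lambda}\theta^{\alpha\lambda n}]\bigr),
\]
which forces a separate lemma establishing $\EE[\xi_1^{-\lambda}]<\infty$ for $\lambda\in[0,1)$ (and implicitly needs $\EE[M_1^{-\alpha\lambda}]<\infty$). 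Your route avoids negative moments of $\tau_n$, and your 3(a) block argument is fine: $\PP(k-g_k>C\ln k)\le p^{C\ln k}$ is summable, so $k-g_k=O(\ln k)$, and since $\ln|\ell_1|$ has exponential tails the contribution $\sum_{i=g_k+1}^k\ln(c|\ell_i|)$ is $o(k)$. One caveat: your claim ``$\ln\xi_k/k\to 0$ follows from a finite positive moment'' only yields $\limsup\le 0$; for $\liminf\ge 0$ you still need either $\EE[\xi_1^{-\mu}]<\infty$ for some $\mu>0$ (the paper's negative-moment lemma) or an argument selecting an index $k^*$ near $n$ with $\xi_{k^*}$ bounded below. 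For 1(b), your Marcinkiewicz--Zygmund approach is cleaner than the paper's, which computes $\lim_n\EE[(c_{\text{crit}}|\ell_1|)^{\alpha/\sqrt n}]^n$ explicitly from the Mellin transform \eqref{eq:Utauy0}; your version works because $|\ell_1|$ has finite small positive and negative moments, hence $\ln|\ell_1|$ has all polynomial moments. For 1(a) the paper simply cites \cite{ProSim-16}; your SLLN reduction is equivalent.

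There is one genuine slip in 2(b)/3(b). From $\EE[\tau_n^\lambda]\le Cn$ you only get $\PP(\tau_n>\varepsilon n^{1/\lambda})\le C\varepsilon^{-\lambda}$, which is constant in $n$ and not summable even along $n_j=2^j$. The fix is to pick an auxiliary exponent $\nu>\lambda$ still in the admissible range, so that $\EE[\tau_n^\nu]\le C'n$ gives $\PP(\tau_n>\varepsilon n^{1/\lambda})\le C'\varepsilon^{-\nu}n^{1-\nu/\lambda}$, which is summable along $2^j$; monotonicity then upgrades to the full sequence. This $\nu>\lambda$ trick is exactly what Petrov's theorem (invoked by the paper) packages.
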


Before proving Theorem \ref{theo:2}, we state a short lemma regarding the negative moments of $\xi_1$.
\begin{lem}\label{lem:NegativeMoment}
\begin{enumerate}
\item[]
\item If the underlying L\'evy process $L$ has negative jumps, then for $\lambda\geq0$ :
$$\EE[\xi_1^{-\lambda}]<+\infty  \quad \Longleftrightarrow \quad  \lambda \in [0,1).$$
\item If the underlying L\'evy process $L$ has no negative jumps, then for any $\lambda\geq0$, we have $\EE[\xi_1^{-\lambda}]<+\infty$.
\end{enumerate}
\end{lem}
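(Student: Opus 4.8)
The plan is to obtain the behaviour of $\xi_1 = \tau_1$ (started from $(0,1)$) near $0$ by analysing its Mellin transform, which we already control through the companion variable $\ell_1$. From \eqref{eq:moments} we know that $\EE[\xi_1^\lambda]<+\infty$ iff $\lambda<\frac{1-\rho}{1+\alpha\rho}$, so the positive-moment side is settled; the content of the lemma is entirely about negative moments, i.e. the density of $\xi_1$ at $0^+$. First I would recall the scaling identity: if $(X,U)$ starts from $(0,1)$ then by self-similarity of $L$ of index $\alpha$ we have $(\tau_1, U_{\tau_1^-}) \law (V^\alpha \xi_1', V\,\ell_1')$ for an independent copy — but more to the point, the first passage time of the integrated process scales like (velocity)$^{?}$, and the relevant small-time estimate is that $X_t \approx t + \frac{1}{2}$(fluctuation), so $\tau_1$ cannot be too small: heuristically $\tau_1 \gtrsim$ something, but in fact for $\alpha<2$ the Lévy process can make $U$ jump down quickly, allowing $X$ to return to $0$ fast, so $\xi_1$ does have mass near $0$.

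The cleanest route is via the Mellin transform. I would try to identify $\EE[\xi_1^{\nu-1}]$ (or at least its analytic structure) using \eqref{eq:Utauy0}: the paper already gives $\EE[|\ell_1|^{\nu-1}] = \frac{\sin(\pi\gamma\nu)}{\sin(\pi\nu(1-\gamma))}$, and the equivalence in \eqref{eq:moments} strongly suggests a parallel formula $\EE[\xi_1^{\nu-1}]$ is a ratio of Gamma/sine factors with poles determining both tails. Concretely, the right edge of the strip of analyticity of $\nu \mapsto \EE[\xi_1^{\nu-1}]$ sits at $\nu = 1 + \frac{1-\rho}{1+\alpha\rho}$ (consistent with \eqref{eq:moments}), and the \emph{left} edge — which governs $\EE[\xi_1^{-\lambda}]$, i.e. $\nu = 1-\lambda$ — is what we must locate. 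For $\alpha<2$ I expect the first pole to the left of $\nu=0$ (equivalently $\lambda=1$) to come from a $\sin$ or $\Gamma$ factor, giving the finiteness window $\lambda\in[0,1)$. I would verify this by writing $\tau_1$ via the identity $\tau_1 \law \xi_1 \cdot 1 = \xi_1$ together with a self-consistency/renewal relation, or by invoking the known Mellin transform of the first passage time of the integrated stable process from Profeta–Simon and reading off the leftmost pole; the value $\lambda=1$ as the boundary is natural since $\EE[\tau_1^{-1}] = \infty$ should follow from the density of $\tau_1$ at $0$ behaving like $t^{-1}$ up to a slowly varying correction (a borderline non-integrable singularity), exactly as in the Brownian ($\alpha=2$) comparison but with the opposite conclusion there.

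For the case $\alpha=2$ (integrated Brownian motion), I would argue separately and more directly: here $U$ is continuous, $X_t = t + \int_0^t B_s\,ds$ near $0$, and a first return of $X$ to $0$ forces $\int_0^{\tau_1} B_s\,ds = -\tau_1$, which near $t=0$ requires $B$ to be of order $-1$ uniformly on $[0,t]$ for $t$ small — a large-deviation event of probability decaying faster than any power of $t$. Quantitatively, $\PP(\tau_1 \le t) \le \PP(\inf_{s\le t} B_s \le -c) \le C\exp(-c'/t)$ for small $t$, and this super-polynomial decay makes all negative moments $\EE[\xi_1^{-\lambda}] = \lambda\int_0^\infty t^{-\lambda-1}\PP(\xi_1\le t)\,dt$ finite. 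This is the classical McKean/Lachal small-ball estimate for integrated Brownian motion, which I would cite.

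The main obstacle I anticipate is pinning down the precise left endpoint $\lambda=1$ in the case $\alpha<2$: the heuristic "density $\sim t^{-1}$" must be made rigorous, and this requires either an explicit-enough Mellin formula for $\EE[\xi_1^{\nu-1}]$ (extending \eqref{eq:Utauy0} to $\xi_1$) with a clean description of its leftmost pole, or a direct two-sided small-time estimate $\PP(\xi_1 \le t) \asymp t\,\ell(t)$ with $\ell$ slowly varying. I would look first for the former, since the paper's sources (Profeta–Simon) already provide such transforms; failing a closed form, I would fall back on the following soft argument for the \emph{upper} bound $\lambda<1$: $\tau_1$ started from $(0,1)$ dominates the time for the velocity, run as a stable process from $1$, to first become negative, call it $\sigma$; for a strictly stable process $\EE[\sigma^{-\lambda}]<\infty$ iff $\lambda<1$ by known first-passage estimates, and a matching lower bound $\PP(\xi_1\le t)\ge \PP(\sigma \le ct)$ (the velocity dips below a fixed negative level quickly, dragging $X$ down) shows $\EE[\xi_1^{-1}]=\infty$.
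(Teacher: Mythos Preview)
Your $\alpha=2$ argument is correct and matches the paper in spirit: both reduce to the super-polynomial (indeed exponential) decay of the probability that the integrated process started from $(0,1)$ is already nonpositive at a small time $t$.

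For $0<\alpha<2$, your primary route via an explicit Mellin formula for $\EE[\xi_1^{\nu-1}]$ is speculative --- no such closed form is available in the cited sources, only the equivalence \eqref{eq:moments}. Your fallback comparison $\tau_1\ge\sigma$ (with $\sigma$ the first time the velocity becomes negative) does yield finiteness for $\lambda<1$. The gap is in the other direction: the claimed inequality $\PP(\xi_1\le t)\ge\PP(\sigma\le ct)$ runs \emph{against} the pathwise domination $\tau_1\ge\sigma$ you just used, and the parenthetical heuristic (``velocity dips below a fixed negative level, dragging $X$ down'') is a different event from $\{\sigma\le ct\}$. Merely hitting a fixed negative level at some time $s\le ct$ does not force $X$ back to $0$ by time $t$ without further control on the path after $s$. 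To make this work you would have to construct, for each small $t$, an event of probability $\asymp t$ on which $X$ provably returns to $0$ by time $t$ --- e.g.\ a single negative jump of controlled size in $[0,\varepsilon t]$ together with a uniform bound on the remaining fluctuations on $[0,t]$ --- and that construction, while feasible, is not what you wrote.

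The paper sidesteps all of this with a short Markov-property identity. Applying the strong Markov property at $\tau_1$ to $\int_0^\infty e^{-\mu t}\PP_{(0,1)}(A_t\le 0)\,dt$, where $A$ is the free integrated process, and using the trivial bounds $1-\rho\le\PP_{(0,\ell_1)}(A_t\le 0)\le 1$ (since $\ell_1<0$), one sandwiches $\EE[e^{-\mu\xi_1}]$ between constant multiples of $\mu\int_0^\infty e^{-\mu t}\PP_{(0,1)}(A_t\le 0)\,dt$. Integrating against $\mu^{\lambda-1}$ then gives two-sided bounds of $\EE[\xi_1^{-\lambda}]$ by $\int_0^\infty t^{-\lambda-1}\PP_{(0,1)}(A_t\le 0)\,dt$. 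The point is that $\PP_{(0,1)}(A_t\le 0)$ is a \emph{fixed-time} marginal, equal by scaling to $\PP\big(L_1\le -(1+\alpha)t^{-1/\alpha}\big)$, whose small-$t$ behaviour ($\asymp t$ for $\alpha<2$, exponential for $\alpha=2$) is elementary. Both directions fall out simultaneously, with no pathwise construction needed.
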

\begin{proof}
We denote by $(A_t=\int_0^{t}L_u\, du, \,t\geq0)$ the (free) integrated stable L\'evy process, and by $\PP_{(x,y)}$ the law of $(A,L)$ when started from $(x,y)$. Applying the Markov property, we have for $\mu>0$ :
$$\int_0^{+\infty} e^{-\mu t}\PP_{(0,1)}\left(A_t\leq 0\right) dt = \EE\left[e^{-\mu \xi_1} \int_0^{+\infty} e^{-\mu t} \PP_{(0, \ell_1)}\left(A_t\leq 0  \right) dt  \right].$$
Recall then that $A_t \;\mathop{=}\limits^{(d)}\; \frac{t^{1+1/\alpha}}{(1+\alpha)^{1/\alpha}} L_1$ under $\PP_{(0,0)}$. Therefore, since $\ell_1<0$ a.s., we deduce that
$$1\geq \PP_{(0, \ell_1)}\left(A_t\leq 0  \right) =\PP\left(  L_1 \leq   -(1+\alpha)^{1/\alpha} t^{-1/\alpha}\ell_1  \right)  \geq 1-\rho,$$ hence
$$ \mu \int_0^{+\infty} e^{-\mu t}\PP_{(0,1)}\left(A_t\leq 0\right) dt \leq \EE\left[e^{-\mu \xi_1}\right] \leq  \frac{\mu}{1-\rho} \int_0^{+\infty} e^{-\mu t}\PP_{(0,1)}\left(A_t\leq 0\right) dt.
$$
Integrating against $\mu^{\lambda-1}$ on $(0,+\infty)$ with $\lambda>0$, we obtain :
$$\lambda \int_0^{+\infty}  t^{-\lambda-1}\PP_{(0,1)}\left(A_t\leq 0\right) dt \leq \EE\left[\xi_1^{-\lambda}\right]  \leq   \frac{\lambda}{1-\rho}  \int_0^{+\infty}  t^{-\lambda-1}\PP_{(0,1)}\left(A_t\leq 0\right) dt. $$
The result now follows from the asymptotics of the stable laws, i.e. when $L$ admits negative jumps (see Bertoin \cite[Prop.4, p221]{Ber}
$$\PP(L_1\leq - t^{-1/\alpha}) \equi_{t\rightarrow 0}   \kappa\, t$$
for some $\kappa>0$. When $L$ has no negative jumps, this asymptotics is known to be exponential.\\
\end{proof}

\begin{proof} We now come back to the proof of Theorem \ref{theo:2}.
\begin{itemize}
\item[1.(a)]
The proof of Point 1.(a) is a direct adaptation of Profeta-Simon \cite[Theorem A]{ProSim-16}, using the decomposition, for $p\geq2$:
$$\tau_{p}-\tau_{p-1} \law L_{\tau_1} \times \tau_1\times  \left(\prod_{i=1}^{p-1} c|\ell_i|  \right)^{\alpha}.$$
Note that due to the reflection, there is no need to use the dual process $-L$ here.
\item[1.(b)] Let $\varepsilon>0$. Using the Markov inequality :
\begin{align*}
\PP\left(\frac{\ln(\tau_n)}{n^{1/\lambda}}\geq \varepsilon\right)&\leq e^{-\varepsilon  n^{1/\lambda - 1/2}} \EE\left[\tau_n^{1/\sqrt{n}}\right]\\
& \leq e^{-\varepsilon  n^{1/\lambda - 1/2}}\left(\EE\left[
 \left(\xi_1U_0^{\alpha}\right)^{1/\sqrt{n}}\right]+   \sum_{k=2}^{n}  \EE\left[(\xi_k U_0^\alpha )^{1/\sqrt{n}} \right] \EE\left[  (c_{\text{crit}} |\ell_1|)^{\alpha/\sqrt{n}}\right]^{k-1}\right)\\
 &\leq  e^{-\varepsilon  n^{1/\lambda - 1/2}}\EE\left[
 \left(\xi_1U_0^{\alpha}\right)^{1/\sqrt{n}}\right] \left(1+   (n-2)  \EE\left[  (c_{\text{crit}} |\ell_1|)^{\alpha/\sqrt{n}}\right]^n \right)
\end{align*}
since, from Jensen's inequality,
$$\EE\left[(c_{\text{crit}} |\ell_1|)^{\alpha/\sqrt{n}}\right]\geq \exp\left(\frac{\alpha}{\sqrt{n}} \EE[ \ln(|\ell_1|)+\ln(c_{\text{crit}})]  \right) =1.$$
Next, using the explicit Mellin transform (\ref{eq:Utauy0}) and Taylor expansions, we may compute the limit :
\begin{align*}
\EE\left[  (c_{\text{crit}} |\ell_1|)^{\alpha/\sqrt{n}}\right]^n  =   e^{-\sqrt{n}\alpha\pi\cot\left(\pi\gamma\right)}
\left(\frac{\sin (\pi  \gamma (1+ \frac{\alpha}{\sqrt{n}}))}{\sin\left(\pi(1-\gamma) (1+ \frac{\alpha}{\sqrt{n}}) \right)}\right)^n \xrightarrow[n\rightarrow +\infty]{} \exp\left(\frac{\pi^2\alpha^2}{2} (1-2\gamma)(1+\cot^2(\pi\gamma))\right).
\end{align*}
The a.s. convergence then follows from the usual application of the Borel-Cantelli's lemma.

\item[2.(a)]\text{and 3.(a)}\; Both cases may be dealt with in the same way, by taking $p\in [0,1[$. We start with
the Markov's inequality :
\begin{align*}
\PP\left(\left|\frac{\ln(\tau_n)}{n}- \alpha\ln(\theta)\right|>\varepsilon\right) &\leq e^{-\lambda\varepsilon n} \left(\EE\left[ \tau_n^\lambda \theta^{ -\alpha \lambda n}\right]
+\EE\left[ \tau_n^{-\lambda} \theta^{\alpha\lambda n } \right]\right)
\end{align*}
Using Lemma \ref{lem:sumgn} with $\lambda$ small enough, the first term may be bounded by
$$
\EE\left[ \tau_n^\lambda \theta^{ -\alpha \lambda n }\right]\leq A_\lambda+B_\lambda \theta^{ -\alpha \lambda n } \leq A_\lambda+B_\lambda <+\infty
$$
since $\theta>1$. Similarly, since $\beta_n$ only takes the values 0 or 1, the second term may be bounded by :
\begin{align*}
\EE\left[ \tau_n^{-\lambda} \theta^{ \alpha \lambda n }\right]&\leq\theta^{ \alpha \lambda n } \EE\left[\left( \xi_{n+1}  (1-\beta_n)\theta^{\alpha n}M_n^\alpha + \xi_{n+1} \beta_n   \theta^{\alpha g_n}  M_{g_n}^\alpha \prod_{i=g_{n}+1}^{n} c^\alpha |\ell_i|^\alpha  \right)^{-\lambda} \right]\\
&\leq\theta^{ \alpha \lambda n }\left(
(1-p) \theta^{-n\alpha\lambda}\EE\left[\xi_1^{-\lambda} M_1^{-\alpha\lambda}   \right]  + p\EE[\xi_1^{-\lambda}]\EE\left[\left(\theta^{g_{n}}  M_{g_{n}}   \prod_{i=g_{n}+1}^{n} c |\ell_i|\right)^{-\alpha\lambda}\right] \right).
\end{align*}
Observe next that, decomposing the expectation with respect to the law of $g_n$ (see (\ref{gn})), we obtain
\begin{multline*}
\theta^{ \alpha \lambda n }\EE\left[\left(\theta^{g_{n}}  M_{g_{n}}   \prod_{i=g_{n}+1}^{n} c |\ell_i|\right)^{-\alpha\lambda}\right] \\=\theta^{ \alpha \lambda n } \EE[U_0^{-\alpha\lambda}] \EE[|c\ell_1|^{-\alpha\lambda}]^{n}p^{n-1} +   (1-p)\EE[M_1^{-\alpha \lambda}] \sum_{k=1}^{n-1} \theta^{\alpha \lambda k}  \EE[|c\ell_1|^{-\alpha\lambda}]^{k} p^{k-1}.
\end{multline*}
This term may be bounded by a constant independent of $n$ as soon as $\lambda$ is small enough so that
$$ \theta^{\alpha \lambda}\EE[|c\ell_1|^{-\alpha\lambda}]p<1.$$
The result then follows again from the Borel-Cantelli's lemma.

\item[2.(b)]and 3.(b) are consequences of the following result by Petrov \cite{Pet}, which we adapt here to our set-up. Assume that $(X_k, k\geq1)$ are positive r.v.'s such that $\EE[X_k^\nu]<\infty$ for some positive $\nu\leq 1$ and all $k\geq1$. If $A_n = \sum_{k=1}^n \EE[X_k^\nu] \xrightarrow[n\rightarrow+\infty]{} +\infty$, then for any $0<\lambda<\nu$, $\sum_{k=1}^n X_k = o( A_n^{1/\lambda})$ a.s. \\
\noindent
We therefore apply the aforementioned result with $X_k = \tau_k-\tau_{k-1}$. When $p=0$ and $\theta=1$, we choose  $\displaystyle \nu<\frac{1-\rho}{1+\alpha\rho}$. This yields :
$$\sum_{k=2}^n \EE[(\tau_k-\tau_{k-1})^\nu] = \sum_{k=1}^n \EE[(\xi_kM_{k-1}^\alpha)^\nu]\equi_{n\rightarrow +\infty}  \EE[(\xi_1M_1^\alpha)^\nu]\, n.$$
When $p\in (0,1)$ and $\theta=1$, we choose $\nu>0$ such that  $c^{\alpha\nu}  \EE\left[|\ell_1|^{\alpha\nu}\right] p<1$. This yields :
$$
\sum_{k=2}^n \EE[(\tau_k-\tau_{k-1})^\nu] =(1-p) \EE[(\xi_1M_1^\alpha)^\nu]\, (n-2) + p \EE\left[\xi_1^\nu\right] \sum_{k=2}^n \EE\left[ \left(  M_{g_{k-1}}   \prod_{i=g_{k-1}+1}^{k-1} c |\ell_i|\right)^{\alpha\nu} \right].\\
$$
This last term may be computed by letting  $\theta\rightarrow 1$ in Lemma \ref{lem:sumgn}. We obtain :
\begin{multline*}
\sum_{k=1}^{n-1} \EE\left[ \left(  M_{g_{k}}   \prod_{i=g_{k}+1}^{k} c |\ell_i|\right)^{\alpha\nu} \right]
= \frac{1}{p}\EE[U_0^{\alpha\nu}] \left(c^{\alpha\nu}\EE\left[ |\ell_1|^{\alpha\nu}\right] p\right)^{n-1}
\\+ \sum_{i=1}^{n-2} \frac{1}{p} \left(c^{\alpha\nu }\EE\left[ |\ell_1|^{\alpha\nu}\right] p \right)^i \left(
\EE[U_0^{\alpha\nu}]  + (1-p)  \EE[M_1^{\alpha\nu}] (n-1-i)\right).
\end{multline*}
Letting $n\rightarrow +\infty$, we obtain the asymptotics :
$$  \sum_{k=1}^{n-1} \EE\left[ \left(  M_{g_{k}}   \prod_{i=g_{k}+1}^{k} c |\ell_i|\right)^{\alpha\nu} \right] \equi_{n\rightarrow +\infty}
n \times (1-p)  \EE[M_1^{\alpha\nu}]  \sum_{i=1}^{+\infty} \frac{1}{p} \left(c^{\alpha\nu }\EE\left[ |\ell_1|^{\alpha\nu}\right] p \right)^i$$
and Points 2.(b) and 3.(b) thus follow directly from Petrov's result.
\end{itemize}

\end{proof}

\section{Langevin processes conditioned of not hitting $(0,0)$}\label{sec:3}

We shall construct in this section the law of an integrated $\alpha$-stable L\'evy process conditioned to stay positive, thus extending some earlier results by Groeneboom, Jongbloed and Wellner \cite{GJW} on integrated Brownian motion. Note that a direct construction seems difficult as we do not have the exact asymptotic of $\PP_{(x,y)}(\tau_1>t)$ but only lower and upper bounds, see \cite[Theorem A]{ProSim-15}. We assume in this section that $p=1$, and $c<c_{\text{crit}}$ so that $\tau_\infty<+\infty$ a.s. We now denote by $\PP^{(c)}$ the law of the solution of (\ref{eq:ConfinedModel}), i.e. of  the integrated $\alpha$-stable Langevin process reflected on a partially elastic boundary,  and, to simplify $\PP=\PP^{(0)}$. The general idea of this section is to first condition the process $(X,U)$ under $\PP^{(c)}$ to not hit the boundary $(0,0)$, which is done using a renewal result, and then to let $c\rightarrow 0$. As a consequence of this construction, we shall finally obtain the exact asymptotics of $\PP_{(x,y)}(\tau_1>t)$.
\\

\noindent
We start by observing that the law of  $U_{\tau_1^-}$ is the same under $\PP^{(c)}$ for any $c\geq0$. Its Mellin's transform is given as follows,  see (\cite[Formula 2.1]{ProSim-16}). Let $q$ be the stable density whose Fourier transform is given by :
$$\int_\RR e^{i\lambda z} q(z) dz =\EE\left[\exp\left(  i\lambda \int_0^1 L_sds\right)\right] = \exp\left( -\;\frac{1}{\alpha +1}\,(i \lambda)^\alpha e^{-i\pi\alpha\rho\, {\rm sgn}(\lambda)}\right).$$
For $s\in (0,1)$, we consider the function
$$\Psi_s(x,u) =  \frac{1}{\Gamma(1-s)}\iint_0^{\infty} \! \lambda^{ -s} q\left(-\left(1 +x\lambda^{1+\alpha} +ut\lambda \right)t^{-1-\frac{1}{\alpha}} \right) d\lambda \, t^{-1-1/\alpha} dt.$$
This function admits an analytic continuation for $s\in[0, \frac{1}{1-\gamma}]$ which we denote $\widehat{\Psi}_s(x,u)$. Then, for $\nu\in\left(0,  \frac{\gamma}{1-\gamma}\right)$, the Mellin's transform of $U_{\tau_1^-}$  admits the expression :
\begin{equation}\label{eq:Utau}
\EE_{(x,u)}\left[|U_{\tau_1^-}|^{\nu}\right] =  \frac{\pi (1+\alpha)^{\frac{\alpha-\nu}{1+\alpha}}  }{\Gamma^2\left(\frac{1+\nu}{1+\alpha}\right)\sin\left(\pi (1+\nu) (1-\gamma)\right)} \widehat{\Psi}_{\nu+1}( x,u).
\end{equation}

\begin{rem}
In the following, to avoid complicated notations in conditional expectations, we shall systematically remove the superscript $^{(c)}$ when taking the expectation of $\mathcal{F}_{\tau_1}$-measurable random variables. Therefore, in the following proof, the notations $\EE^{(c)}$ and $\PP^{(c)}$ will always apply to the variables $\tau_{\infty}$, $\tau_n$ and $U_{\tau_n^-}$ with $n\geq 2$, while the notations $\EE$ and $\PP$ will refer only to  $\tau_1$ and $U_{\tau_1^-}$.
\end{rem}

 \subsection{The case $0<c<c_{\text{crit}}$}

\noindent
We follow Jacob \cite[Section 3]{Jacob-13}. Let $\eta(c)>0$ be the unique solution of the equation
\begin{equation}\label{eq:kc}
c^{\alpha \eta(c)}\EE_{(0,1)}\left[|U_{\tau_1^-}|^{\alpha \eta(c)}\right]=1\qquad \Longleftrightarrow \qquad c^{\alpha \eta(c)}\frac{\sin\left(\pi \gamma(\alpha \eta(c)+1)\right)  }{\sin\left(\pi(1-\gamma)(\alpha \eta(c)+1)\right) }=1.
\end{equation}
Note that $\eta(c)$ is well-defined since $f(x) = \EE_{(0,1)}\left[|cU_{\tau_1^-}|^{\alpha x}\right]$ is a convex function, whose derivative at $x=0$ is negative, and such that $ \lim\limits_{x\rightarrow \frac{1-\rho}{1+\alpha \rho}} f(x) = +\infty$. This implies in particular that  $\eta(c)$ is a decreasing function of $c$  such that
$$\displaystyle\lim_{c\rightarrow 0}\eta(c) = \frac{1-\rho}{1+\alpha \rho}=\eta \qquad \text{and}\qquad \displaystyle\lim_{c\rightarrow c_{\text{crit}}}\eta(c) =0.$$
We define the harmonic function $h^c$ for $\{x>0 \text{ and }u\in \RR\}$ or  $\{x=0 \text{ and }u>0\}$ by
\begin{equation}\label{eq:hc}
h^c(x,u) = c^{\alpha \eta(c)}\EE_{(x,u)}[|U_{\tau_1^-}|^{\alpha \eta(c)}]=\EE^{(c)}_{(x,u)}\left[ |U_{\tau_1}|^{\alpha \eta(c)} \right]. 
\end{equation}
Note that $h^c$ enjoys the following scaling property  :
\begin{equation}\label{eq:scaleh}
h^c(x,u) =  x^{\frac{\alpha \eta(c)}{1+\alpha}}h^c(1,u x^{-1/(\alpha+1)}).
\end{equation}
In particular, for $x> 0$ and $u> 0$, we have
$$h^c(0,u) =u^{\alpha \eta(c)} \qquad \text{ and }\qquad h^c(x,0) = x^{\frac{\alpha \eta(c)}{1+\alpha}} h^c(1,0).$$

\begin{prop}
For $0<c<c_{\text{crit}}$, there exists a probability $\PP_{(x,u)}^{(c)\uparrow}$ on $(\Omega, \Ff_\infty)$ such that
$$\forall \Lambda_s\in \Ff_s,\qquad \lim_{t\rightarrow+\infty}\PP_{(x,u)}^{(c)}(\Lambda_s| \tau_\infty>t) = \PP_{(x,u)}^{(c)\uparrow}(\Lambda_s).$$
$\PP_{(x,u)}^{(c)\uparrow}$ may be described by an $h$-transform with respect to $\PP_{(x,u)}^{(c)}$ as follows :
$$\forall \Lambda_s\in \Ff_s, \qquad\PP_{(x,u)}^{(c)\uparrow}(\Lambda_s) = \frac{1}{h^c(x,u)} \EE^{(c)}_{(x,u)}\left[\ind_{\Lambda_s} h^c(X_s, U_s) \ind_{\{s<\tau_\infty\}}\right]. $$
\end{prop}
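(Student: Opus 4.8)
The plan is to prove the Proposition via a standard last-exit/renewal argument, adapted from Jacob's construction for integrated Brownian motion. The key structural fact is that, under $\PP^{(c)}$ with $p=1$ and $c<c_{\text{crit}}$, the process hits $(0,0)$ at the finite time $\tau_\infty$, and the tail $\PP^{(c)}_{(x,u)}(\tau_\infty>t)$ is regularly varying of index $-\eta(c)$. This last point follows by combining the scaling relation \eqref{eq:scaling}--\eqref{eq:taun} with Goldie's renewal theorem \cite[Theorem 4.1]{Gol}, exactly as in Point 1 of the proof of Theorem \ref{theo:1}: writing $\tau_\infty$ as a perpetuity driven by the multiplicative random walk $\prod c|\ell_i|^\alpha$ whose Lyapunov exponent is $\alpha(\ln c-\ln c_{\text{crit}})<0$, we get $\PP^{(c)}_{(0,1)}(\tau_\infty>t)\sim \kappa\, t^{-\eta(c)}$, and by the scaling property of $(X,U)$ the same holds from an arbitrary starting point $(x,u)$ with a constant $\kappa(x,u)$.

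First I would establish the harmonic-type identity that underlies the $h$-transform. By the Markov property at time $s$ on the event $\{s<\tau_\infty\}$,
\begin{equation*}
\PP^{(c)}_{(x,u)}(\Lambda_s,\ \tau_\infty>t) = \EE^{(c)}_{(x,u)}\left[\ind_{\Lambda_s}\ind_{\{s<\tau_\infty\}}\,\PP^{(c)}_{(X_s,U_s)}(\tau_\infty>t-s)\right].
\end{equation*}
Dividing by $\PP^{(c)}_{(x,u)}(\tau_\infty>t)$ and letting $t\to+\infty$, the regular variation gives the pointwise limit
$$\frac{\PP^{(c)}_{(X_s,U_s)}(\tau_\infty>t-s)}{\PP^{(c)}_{(x,u)}(\tau_\infty>t)} \xrightarrow[t\to+\infty]{} \frac{\kappa(X_s,U_s)}{\kappa(x,u)},$$
so the candidate harmonic function is $\kappa(\cdot,\cdot)$ up to a constant; one must then verify $\kappa(x,u)=\text{const}\cdot h^c(x,u)$, which amounts to identifying the constant $\kappa$ in Goldie's theorem. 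Rather than computing $\kappa$ explicitly, the cleaner route is to show directly that $h^c$ as defined in \eqref{eq:hc} is $\PP^{(c)}$-invariant on $\{s<\tau_\infty\}$, i.e. $\EE^{(c)}_{(x,u)}[h^c(X_s,U_s)\ind_{\{s<\tau_\infty\}}]=h^c(x,u)$: this follows because $(h^c(X_{t\wedge\tau_\infty},U_{t\wedge\tau_\infty}))_{t\ge0}$ is a martingale (by the optional-sampling/strong Markov argument applied at the hitting times $\tau_n$, using $c^{\alpha\eta(c)}\EE_{(0,1)}[|U_{\tau_1^-}|^{\alpha\eta(c)}]=1$ from \eqref{eq:kc} to get exact invariance across each excursion), and it is uniformly integrable because $\eta(c)<1$ so that $\EE^{(c)}_{(x,u)}[h^c(X_s,U_s)\ind_{\{s<\tau_\infty\}}]$ does not leak mass at $\tau_\infty$ where $h^c=0$.

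Next I would upgrade the pointwise limit to the statement of the Proposition. Setting $Q_t(\Lambda_s):=\PP^{(c)}_{(x,y)}(\Lambda_s\mid\tau_\infty>t)$, the identity above rewrites as
$$Q_t(\Lambda_s) = \EE^{(c)}_{(x,u)}\left[\ind_{\Lambda_s}\ind_{\{s<\tau_\infty\}}\,\frac{\PP^{(c)}_{(X_s,U_s)}(\tau_\infty>t-s)}{\PP^{(c)}_{(x,u)}(\tau_\infty>t)}\right],$$
and by dominated convergence (the ratio is bounded, for $t$ large, by a constant times $h^c(X_s,U_s)$ thanks to the uniform regular-variation/Potter bounds, and $h^c(X_s,U_s)\ind_{\{s<\tau_\infty\}}$ is integrable by the previous paragraph) the right-hand side converges to $\frac{1}{h^c(x,u)}\EE^{(c)}_{(x,u)}[\ind_{\Lambda_s}h^c(X_s,U_s)\ind_{\{s<\tau_\infty\}}]$. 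This limit, as a function of $\Lambda_s$, is a consistent family of sub-probability measures on the filtration $(\Ff_s)$, and the martingale property of $h^c(X_{s\wedge\tau_\infty},U_{s\wedge\tau_\infty})$ shows the total mass is $1$ for every $s$; Kolmogorov's extension theorem then yields a genuine probability $\PP^{(c)\uparrow}_{(x,u)}$ on $(\Omega,\Ff_\infty)$ with the stated $h$-transform description, and it automatically satisfies the convergence $\lim_{t\to\infty}\PP^{(c)}_{(x,y)}(\Lambda_s\mid\tau_\infty>t)=\PP^{(c)\uparrow}_{(x,u)}(\Lambda_s)$.

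The main obstacle is the interchange of limits needed to pass from the pointwise convergence of $\PP^{(c)}_{(X_s,U_s)}(\tau_\infty>t-s)/\PP^{(c)}_{(x,u)}(\tau_\infty>t)$ to the convergence of its expectation against $\ind_{\Lambda_s}$: one needs a uniform (in the starting point) control of the regularly varying tail, of Potter type, together with the integrability of the dominating function $h^c(X_s,U_s)\ind_{\{s<\tau_\infty\}}$ — and the latter is exactly where $\eta(c)<1$ is essential, since it guarantees both that $h^c$ is finite and that the martingale $h^c(X_{\cdot\wedge\tau_\infty},U_{\cdot\wedge\tau_\infty})$ is uniformly integrable with no atom of mass escaping to the absorbing point. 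Verifying these uniform tail bounds requires re-examining the perpetuity representation \eqref{eq:taun} with the scaling \eqref{eq:scaling}, controlling the contribution of the first excursion $\xi_1 V_0^\alpha$ uniformly over $V_0$ in compact sets, which is routine but is the technical heart of the argument.
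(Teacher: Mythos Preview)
Your overall architecture matches the paper's: Goldie's tail asymptotic, Markov property at time $s$, then passage to the limit via the harmonic identity $\EE^{(c)}_{(x,u)}[h^c(X_s,U_s)\ind_{\{s<\tau_\infty\}}]=h^c(x,u)$. Two differences are worth noting. First, the paper passes to the limit via Scheff\'e's lemma rather than dominated convergence with Potter-type bounds: once the harmonic identity is known, pointwise convergence of the conditional probabilities plus equality of the limiting total mass gives $L^1$ convergence for free, so the uniform-in-starting-point tail control you flag as ``the main obstacle'' is never needed. Second, the extension of the tail asymptotic from $(0,1)$ to a general starting point $(x,u)$ is not ``by scaling'' (scaling only relates $(0,u)$ to $(0,1)$); the paper applies the strong Markov property at $\tau_1$ together with a short dominated-convergence argument to get $\PP^{(c)}_{(x,u)}(\tau_\infty>t)\sim\kappa\,h^c(x,u)\,t^{-\eta(c)}$.

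The genuine gap in your write-up is the harmonic identity itself. You assert that $h^c(X_{t\wedge\tau_\infty},U_{t\wedge\tau_\infty})$ is a uniformly integrable martingale ``because $\eta(c)<1$'', but this is precisely the hard step. Writing $V_n=c|U_{\tau_n^-}|$, the sequence $V_n^{\alpha\eta(c)}=h^c(X_{\tau_n},U_{\tau_n})$ has constant expectation $h^c(x,u)$ by \eqref{eq:kc} and converges a.s.\ to $0$ (since $c<c_{\text{crit}}$): this is the prototypical \emph{non}-UI example, so no soft argument can work. What must actually be shown is that the restricted expectation $I_n:=\EE^{(c)}_{(x,u)}[V_n^{\alpha\eta(c)}\ind_{\{\tau_n\le s\}}]\to 0$, and this is where the paper does real work: it splits $I_n$ according to $\{|U_{\tau_n^-}|\le 1\}$ (which goes to $0$ by bounded convergence) and $\{|U_{\tau_n^-}|>1\}$, and for the latter uses scaling and the implication $\{\tau_1 c^\alpha|U_{\tau_{n-1}^-}|^\alpha\le s\}\cap\{c|U_{\tau_{n-1}^-}||U_{\tau_1^-}|\ge1\}\subset\{\tau_1/|U_{\tau_1^-}|^\alpha\le s\}$ to obtain a contraction $K_n\le r\,I_{n-1}$ with $r=\EE_{(0,1)}[(c|U_{\tau_1^-}|)^{\alpha\eta(c)}\ind_{\{\tau_1/|U_{\tau_1^-}|^\alpha\le s\}}]<1$. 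Iterating gives $I_n\to 0$. Your proposal would need to supply this (or an equivalent) argument.
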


\begin{proof}
Using the renewal Equation (\ref{eq:10bis}) and the scaling property, we deduce from Goldie \cite[Theorem 4.1]{Gol}, that there exists $\kappa>0$ such that:
$$\PP^{(c)}_{(0,u)}(\tau_\infty>t) \equi_{t\rightarrow +\infty}  \kappa \frac{u^{\alpha \eta(c)}}{t^{\eta(c)}}$$
where $\eta(c)$ is the solution of Equation (\ref{eq:kc}).
Applying the Markov property at the time $\tau_1$, we then obtain that
$$\PP^{(c)}_{(x,u)}(\tau_\infty>t) = \EE_{(x,u)}\left[\PP^{(c)}_{(0, c|U_{\tau_1^-}|)}(\tau_\infty >t-\tau_1)\right].
 $$
 To apply the dominated convergence theorem, let us fix some deterministic $A>0$ such that, for any $t\geq A$, we have $t^{\eta(c)}\PP_{(0, 1)}^{(c)}(\tau_\infty >t)\leq 2\kappa$. Then, by scaling and since $0<\eta(c)<1$ :
 \begin{align*}
 &t^{\eta(c)}\PP_{(0, c|U_{\tau_1^-}|)}^{(c)}(\tau_\infty >t-\tau_1) \\
 \leq&\; (t-\tau_1)^{\eta(c)}\PP_{(0, 1)}^{(c)}\left(\tau_\infty >\frac{t-\tau_1}{c^\alpha|U_{\tau_1^-}|^\alpha}\right) + \tau_1^{\eta(c)}\\
 \leq&\;
A^{\eta(c)}c^{\alpha \eta(c)} |U_{\tau_1^-}|^{\alpha \eta(c)}  \ind_{\left\{\frac{t-\tau_1}{c^\alpha|U_{\tau_1^-}|^\alpha}\leq A\right\}} + 2\kappa c^{\alpha \eta(c)} |U_{\tau_1^-}|^{\alpha \eta(c)}  \ind_{\left\{\frac{t-\tau_1}{c^\alpha|U_{\tau_1^-}|^\alpha}\geq A\right\}} + \tau_1^{\eta(c)}\\
\leq &\; (A^{\eta(c)}+2\kappa)c^{\alpha \eta(c)} |U_{\tau_1^-}|^{\alpha \eta(c)}  + \tau_1^{\eta(c)}
 \end{align*}
 which is integrable since $\eta(c)<\eta$. The dominated convergence theorem then yields :
 $$\PP^{(c)}_{(x,u)}(\tau_\infty>t) \equi_{t\rightarrow +\infty}\kappa  \frac{h^c(x,u)}{t^{\eta(c)}}.$$
Next, applying the Markov property at time $s$, we deduce that
$$\frac{\PP^{(c)}_{(x,u)}(\tau_\infty>t  | \mathcal{F}_s)}{\PP^{(c)}_{(x,u)}(\tau_\infty>t  )} \xrightarrow[t\rightarrow +\infty]{} \frac{h^c(X_s,U_s)}{h(x,u)}\ind_{\{\tau_\infty>s\}}$$
and the result (i.e. the $L^1$ convergence) will follow from Scheff\'e's lemma, once we have proven that
$$\EE^{(c)}_{(x,u)}\left[h^c(X_s,U_s)\ind_{\{\tau_\infty>s\}}\right]=h^c(x,u).$$
Observe that by definition of $\eta(c)$:
\begin{align*}
 \EE^{(c)}_{(x,u)}[|U_{\tau_n^-}|^{\alpha \eta(c)}] &=  \EE^{(c)}_{(x,u)}\left[  \EE_{(0, c|U_{\tau_{n-1}^-}|)} \left[|U_{\tau_1^-}|^{\alpha \eta(c)}\right]\right] \\
 &= \EE^{(c)}_{(x,u)}\left[   (c|U_{\tau_{n-1}^-}|)^{\alpha \eta(c)}  \EE_{(0,1)} \left[|U_{\tau_1^-}|^{\alpha \eta(c)}\right]\right] = \EE^{(c)}_{(x,u)}[|U_{\tau_{n-1}^-}|^{\alpha \eta(c)}].
\end{align*}
By iteration, we deduce that
\begin{align}
\notag h^c(x,u) &=  c^{\alpha \eta(c)}\EE^{(c)}_{(x,u)}[|U_{\tau_n^-}|^{\alpha \eta(c)}]\\
\label{eq:hctau}&=c^{\alpha \eta(c)} \EE^{(c)}_{(x,u)}\left[|U_{\tau_n^-}|^{\alpha \eta(c)} \ind_{\{\tau_n\leq s\}} \right] +  \EE^{(c)}_{(x,u)}\left[h^c(X_s, U_s)\ind_{\{\tau_n> s\}} \right].
\end{align}
It remains to prove that the first term converges towards 0 as $n \rightarrow +\infty$.

By the Markov property :
\begin{multline*}
I_n:=\EE^{(c)}_{(x,u)}\left[|U_{\tau_n^-}|^{\alpha \eta(c)} \ind_{\{\tau_n\leq s\}} \right]  \\
=\underbrace{ \EE^{(c)}_{(x,u)}\left[|U_{\tau_n^-}|^{\alpha \eta(c)} \ind_{\{\tau_n\leq s\}}\ind_{\{|U_{\tau_n^-}|\leq 1\}} \right]}_{J_n}
+\underbrace{ \EE^{(c)}_{(x,u)}\left[   \EE_{(0, c|U_{\tau_{n-1}^-}|)}\left[|U_{\tau_1^-}|^{\alpha \eta(c)} \ind_{\{\tau_1+ \tau_{n-1}\leq s\}} \ind_{\{|U_{\tau_1^-}|\geq 1\}}   \right]\right]}_{K_n}.
\end{multline*}
Observe first that by dominated convergence, $J_n\xrightarrow[n\rightarrow +\infty]{}0$. Next, by scaling, the second term $K_n$ may be written
$$K_n=\EE^{(c)}_{(x,u)}\left[ (c|U_{\tau_{n-1}^-}|)^{\alpha \eta(c)}  \EE_{(0,1)}\left[|U_{\tau_1^-}|^{\alpha \eta(c)} \ind_{\{\tau_1c^\alpha|U_{\tau_{n-1}^-}|^\alpha+ \tau_{n-1}\leq s\}} \ind_{\{ c |U_{\tau_{n-1}^-}| |U_{\tau_1^-}|\geq 1\}}   \right]\right].
$$
Applying the inequality  $\ind_{\{a+b\leq s\}} \leq \ind_{\{a\leq s\}}\ind_{\{b\leq s\}}$ which is valid for positive $a$ and $b$, we obtain
$$K_n\leq \EE^{(c)}_{(x,u)}\left[ |U_{\tau_{n-1}^-}|^{\alpha \eta(c)}\ind_{\{ \tau_{n-1}\leq s\}}   \EE_{(0,1)}\left[(c|U_{\tau_1^-}|)^{\alpha \eta(c)} \ind_{\{\tau_1c^\alpha|U_{\tau_{n-1}^-}|^\alpha\leq s\}}\ind_{\{ c |U_{\tau_{n-1}^-}| |U_{\tau_1^-}|\geq 1\}}   \right]\right]. $$
Then, since $$\ind_{\{\tau_1c^\alpha|U_{\tau_{n-1}^-}|^\alpha\leq s\}}\ind_{\{ c |U_{\tau_{n-1}^-}| |U_{\tau_1^-}|\geq 1\}}\; \leq\; \ind_{\{\tau_1/|U_{\tau_{1}^-}|^\alpha\leq s\}} $$
we deduce that
$$K_n\leq \; \EE^{(c)}_{(x,u)}\left[ |U_{\tau_{n-1}^-}|^{\alpha \eta(c)}\ind_{\{ \tau_{n-1}\leq s\}} \right]   \EE_{(0,1)}\left[(c|U_{\tau_1^-}|)^{\alpha \eta(c)} \ind_{\{\tau_1/|U_{\tau_{1}^-}|^\alpha\leq s\}} \right] = I_{n-1} \times r
$$
where we have set $r =  \EE_{(0,1)}\left[(c|U_{\tau_1^-}|)^{\alpha \eta(c)} \ind_{\{\tau_1/|U_{\tau_{1}^-}|^\alpha\leq s\}} \right] \in (0,1)$. By iteration, we obtain for $n\geq 2$,
$$I_n \leq \sum_{k=0}^{n-2}  J_{n-k}\, r^k  + I_1 r^{n-1}$$
and the result follows by letting $n\rightarrow +\infty$ and using dominated convergence.

\end{proof}

\subsection{The case $c=0$}

\noindent
We are now interested in letting $c\rightarrow 0$, in order to obtain the law of a (free) stable Langevin process conditioned on not hitting $0$. In this case, notice from (\ref{eq:kc}) and the limit of $\eta(c)$ that :
\begin{align*}
\lim_{c\rightarrow 0} h^c(x,u)= \lim_{\eta(c) \rightarrow \eta}    \frac{\sin\left(\pi (1+\alpha \eta(c))(1-\gamma)\right)}{\sin (\pi  \gamma(1+\alpha \eta(c)))} \EE_{(x,u)}[|U_{\tau_1^-}|^{\alpha \eta(c)}].
\end{align*}
Passing to the limit in the expression (\ref{eq:Utau}) of $h^c$, we deduce that $h^0$ admits the representation :
\begin{align*}\label{eq:h0}
h^0(x,u) =  \frac{\pi (1+\alpha)^{\frac{\alpha(1-\eta)}{1+\alpha}}  }{\Gamma^2\left(\frac{1+\alpha \eta}{1+\alpha}\right)\sin (\pi  \gamma(1+\alpha \eta))}\widehat{\Psi}_{\alpha \eta+1}( x,u).
\end{align*}
Note also, that writing
$$\EE_{(x,u)}[|U_{\tau_1^-}|^{\alpha \eta(c)}] = h^c(x,u)  \frac{\sin (\pi  \gamma(1+\alpha \eta(c)))} {\sin\left(\pi (1+\alpha \eta(c)) (1-\gamma)\right)}$$
$h^0$ may  be obtained by the converse mapping theorem for Mellin's transform (see for instance \cite{FGD}) :
$$\PP_{(x,u)}(|U_{\tau_1^-}|>z) \equi_{z\rightarrow +\infty}  \frac{\sin\left(\pi \gamma(1+\alpha \eta)\right)  }{\pi \gamma}\frac{h^0(x,u)}{z^{\alpha \eta}}.$$
We will see in Corollary \ref{cor:persistence} that unlike $h^c$, the function $h^0$ is increasing in both variables, and that for $x>0$,  $\lim\limits_{u\rightarrow -\infty} h^0(x,u)=0$. The insight for this is as follows : for $U_{\tau_1^-}$ to take a very large negative value, the process $U$ must first make a very long positive excursion (so that $X$ take a large positive value) before dropping to the negative values. In other words, if the starting point $U_0=u$ is very negative, then $X$ will hit almost immediately the boundary, i.e. $U_{\tau_1^-}\simeq u$, and thus the probability that  $U_{\tau_1^-}\simeq u$ is smaller than $-z$ will be close to 0 as $z\rightarrow +\infty$.

\begin{cor}\label{cor:Pup0}
The law of an integrated $\alpha$-stable L\'evy process conditioned to stay positive is given by
$$\forall \Lambda_s\in \Ff_s, \qquad\PP_{(x,u)}^{\uparrow}(\Lambda_s) = \frac{1}{h^0(x,u)} \EE_{(x,u)}\left[\ind_{\Lambda_s} h^0(X_s, U_s) \ind_{\{s<\tau_1\}}\right] $$
\end{cor}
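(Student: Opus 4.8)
The plan is to pass to the limit $c\to 0$ in the representation of $\PP^{(c)\uparrow}_{(x,u)}$ provided by the previous Proposition, and to check along the way that the right‑hand side of the announced formula is a genuine probability. Two preliminary observations are used throughout. The pointwise convergence $h^c\to h^0$ on $\{x>0\}\cup\{x=0,u>0\}$ has already been obtained before the statement. Moreover, \eqref{eq:kc} may be rewritten as $c^{\alpha\eta(c)}=\sin(\pi(1-\gamma)(1+\alpha\eta(c)))/\sin(\pi\gamma(1+\alpha\eta(c)))$, and since the definitions of $\gamma$ and $\eta$ give $(1-\gamma)(1+\alpha\eta)=1$ and $\gamma(1+\alpha\eta)=\alpha\eta\in(0,1)$, letting $c\to0$ yields $c^{\alpha\eta(c)}\to0$ while $h^c(x,u)\to h^0(x,u)\in(0,\infty)$. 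Now fix $s\geq0$ and $\Lambda_s\in\Ff_s$, and split $\{s<\tau_\infty\}=\{s<\tau_1\}\cup\{\tau_1\le s<\tau_\infty\}$ in the Proposition's formula, so that $h^c(x,u)\,\PP^{(c)\uparrow}_{(x,u)}(\Lambda_s)=N^{(1)}_c+N^{(2)}_c$ with $N^{(1)}_c=\EE^{(c)}_{(x,u)}[\ind_{\Lambda_s}h^c(X_s,U_s)\ind_{\{s<\tau_1\}}]$ and $N^{(2)}_c=\EE^{(c)}_{(x,u)}[\ind_{\Lambda_s}h^c(X_s,U_s)\ind_{\{\tau_1\le s<\tau_\infty\}}]$.

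For $N^{(2)}_c$ I would bound $\ind_{\Lambda_s}\leq1$, apply the strong Markov property at $\tau_1$ together with the martingale identity $\EE^{(c)}_{(0,v)}[h^c(X_r,U_r)\ind_{\{r<\tau_\infty\}}]=h^c(0,v)=v^{\alpha\eta(c)}$ established in the proof of the Proposition, and use that the law of $(X_t,U_t)_{0\le t\le\tau_1}$ does not depend on $c$; this gives $N^{(2)}_c\le c^{\alpha\eta(c)}\EE_{(x,u)}[\ind_{\{\tau_1\le s\}}|U_{\tau_1^-}|^{\alpha\eta(c)}]$. On $\{\tau_1\le s\}$ one has $|U_{\tau_1^-}|=|u+L_{\tau_1^-}|\le |u|+\sup_{r\le s}|L_r|$, and $\sup_{r\le s}|L_r|$ has finite moments of every order strictly less than $\alpha$; since $\alpha\eta<\alpha$, this bounds $\EE_{(x,u)}[\ind_{\{\tau_1\le s\}}|U_{\tau_1^-}|^{\alpha\eta(c)}]$ uniformly for $c$ near $0$, and $c^{\alpha\eta(c)}\to0$ forces $N^{(2)}_c\to0$.

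For $N^{(1)}_c$, the $c$‑independence of the pre‑$\tau_1$ law turns it into $\EE_{(x,u)}[\ind_{\Lambda_s}h^c(X_s,U_s)\ind_{\{s<\tau_1\}}]$, an expectation for the free integrated $\alpha$‑stable Langevin process killed at $\tau_1$. On $\{s<\tau_1\}$ we have $X_s>0$, hence $h^c(X_s,U_s)\to h^0(X_s,U_s)$ almost surely. \textbf{The main obstacle is to pass this limit under the expectation sign.} I would do so by dominated convergence, the required uniform bound being obtained either from the integral representation of $h^c$ derived before the Proposition (the exponent $\alpha(\eta(c)-1)/(1+\alpha)$ stays in a fixed compact subinterval of $(-1,0)$ as $c\to0$, so the integrand $\EE[(A_t)_-^{\bullet}]-\EE[(X_s+U_st+A_t)_-^{\bullet}]$ is dominated uniformly in $c$ by a function integrable in $t$ and, in $(X_s,U_s)$, bounded by a power of $X_s$ plus a power of $|U_s|$), or from the scaling relation for $h^c$ together with the estimate $h^c(1,w)\le C(1+|w|^{\alpha\eta(c)})$ as $|w|\to\infty$, which gives $h^c(z,v)\le C'(1+z^{\alpha\eta/(1+\alpha)}+|v|^{\alpha\eta})$ uniformly for $c\in(0,c_0)$ — a bound that is $\PP_{(x,u)}$‑integrable because $\alpha\eta<\alpha$. (Should one instead verify that $h^c\uparrow h^0$ as $c\downarrow0$, equivalently that $\nu\mapsto\EE_{(x,u)}[|U_{\tau_1^-}|^{\nu-1}]/\EE_{(0,1)}[|U_{\tau_1^-}|^{\nu-1}]$ is nondecreasing on $[1,1/(1-\gamma)]$, monotone convergence applies at once.) In any case $N^{(1)}_c\to\EE_{(x,u)}[\ind_{\Lambda_s}h^0(X_s,U_s)\ind_{\{s<\tau_1\}}]$.

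Putting the two pieces together, $\PP^{(c)\uparrow}_{(x,u)}(\Lambda_s)\to h^0(x,u)^{-1}\EE_{(x,u)}[\ind_{\Lambda_s}h^0(X_s,U_s)\ind_{\{s<\tau_1\}}]$ for every $s$ and every $\Lambda_s\in\Ff_s$. Taking $\Lambda_s=\Omega$ gives $\EE_{(x,u)}[h^0(X_s,U_s)\ind_{\{s<\tau_1\}}]=h^0(x,u)$, i.e. $(h^0(X_s,U_s)\ind_{\{s<\tau_1\}})_{s\ge0}$ is a martingale under the free Langevin law; hence the announced formula defines a probability on each $\Ff_s$, these are consistent by the Markov property, and they extend to a probability $\PP^\uparrow_{(x,u)}$ on $(\Omega,\Ff_\infty)$. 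Finally, applying the $N^{(2)}_c$ estimate with $\Lambda_s=\{\tau_1\le s\}$ shows $\PP^\uparrow_{(x,u)}(\tau_1\le s)=\lim_{c\to0}\PP^{(c)\uparrow}_{(x,u)}(\tau_1\le s)=0$ for every $s$, so $\PP^\uparrow_{(x,u)}(\tau_1=+\infty)=1$: the process never returns to $0$. Since the laws $\PP^{(c)}$ all coincide up to $\tau_1$ and $\PP^{(c)\uparrow}$ is, by the Proposition, the reflected process conditioned never to reach $(0,0)$, the limit $\PP^\uparrow$ is indeed the law of the free integrated $\alpha$‑stable Lévy process conditioned to stay positive.
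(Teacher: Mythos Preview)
Your proof is correct, and its technical core coincides with the paper's: both establish the martingale identity $\EE_{(x,u)}[h^0(X_s,U_s)\ind_{\{s<\tau_1\}}]=h^0(x,u)$ by letting $c\to0$ in the identity \eqref{eq:hctau} with $n=1$, showing that the boundary term $c^{\alpha\eta(c)}\EE_{(x,u)}[|U_{\tau_1^-}|^{\alpha\eta(c)}\ind_{\{\tau_1\le s\}}]$ vanishes via the moment bound on $\sup_{r\le s}|L_r|$ and that the interior term converges by dominated convergence through a uniform estimate of the form $h^c(x,u)\le A+B|u|^{\alpha\eta}+Cx^{\alpha\eta/(1+\alpha)}$ obtained from the scaling of $h^c$.

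The framing differs slightly. You pass to the limit $c\to0$ directly in the formula for $\PP^{(c)\uparrow}_{(x,u)}(\Lambda_s)$, splitting according to $\{s<\tau_1\}$ versus $\{\tau_1\le s<\tau_\infty\}$ and using strong Markov at $\tau_1$ to reduce $N^{(2)}_c$ to the boundary term; the martingale property then falls out by taking $\Lambda_s=\Omega$. The paper instead introduces a separate penalization, conditioning the free process on $\{|U_{\tau_1^-}|>t\}$ and invoking Scheff\'e's lemma; the martingale identity is what makes Scheff\'e applicable, and it is proved by exactly the same $c\to0$ computation. Your route is a bit more direct and yields as a by-product the convergence $\PP^{(c)\uparrow}\to\PP^\uparrow$ on each $\Ff_s$; the paper's route provides an alternative probabilistic interpretation of $\PP^\uparrow$ (as the limit of conditioning on a large exit velocity) that does not pass through the reflected processes.
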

\begin{proof}
To show that this definition makes sense, we shall prove that $\PP^\uparrow$ may be obtained by a penalization procedure, i.e. that :
$$\PP_{(x,u)}\left(\Lambda_s |  |U_{\tau_1^-}|>z \right) \xrightarrow[z\rightarrow +\infty]{} \PP_{(x,u)}^\uparrow\left(\Lambda_s\right).
$$
Indeed, observe first that :
\begin{align*}
z^{\alpha \eta}\PP_{(x,u)}\left(|U_{\tau_1^-}|>z  | \mathcal{F}_s \right) &= z^{\alpha \eta}\PP_{(x,u)}\left(\{|U_{\tau_1^-}|>z\} \cap  \{\tau_1\leq s\}  | \mathcal{F}_s  \right)  +   z^{\alpha \eta}\PP_{(x,u)}\left(\{|U_{\tau_1^-}|>z\} \cap  \{\tau_1> s\}  | \mathcal{F}_s  \right)\\
&= z^{\alpha \eta}\ind_{\{\{|U_{\tau_1^-}|>z\} \cap  \{\tau_1\leq s\}  \}} + z^{\alpha \eta}\ind_{\{\tau_1 > s  \}}\PP_{(X_s, U_s)} \left( |U_{\tau_1^-}|>z \right)\\
&\xrightarrow[z\rightarrow +\infty]{}  \ind_{\{\tau_1 > s  \}}h^0(X_s, U_s)  \frac{\sin\left(\pi \gamma(1+\alpha \eta)\right)  }{\pi \gamma}
\end{align*}
hence, as before, the $L^1$-convergence will follow from Scheff\'e's lemma once we have proven that
$$ \EE_{(x,u)}\left[h^0(X_s, U_s) \ind_{\{s<\tau_1\}}\right] = h^0(x, u). $$
Going back to Formula (\ref{eq:hctau}) with $n=1$, we obtain that :
\begin{equation}
h^c(x,u)
\label{eq:hc0}=c^{\alpha \eta(c)} \EE_{(x,u)}\left[|U_{\tau_1}|^{\alpha \eta(c)} \ind_{\{\tau_1\leq s\}} \right] +  \EE_{(x,u)}\left[h^c(X_s, U_s)\ind_{\{\tau_1> s\}} \right].
\end{equation}
Since $\eta \alpha<\alpha$, the first term is easily bounded by :
\begin{align*}
c^{\alpha \eta(c)} \EE_{(x,u)}\left[|U_{\tau_1}|^{\alpha \eta(c)} \ind_{\{\tau_1\leq s\}} \right]
&\leq c^{\alpha \eta(c)} \EE_{(x,u)}\left[\sup_{u\leq s}|U_{u}|^{\alpha \eta(c)} \ind_{\{\tau_1\leq s\}} \right]\\
&\leq c^{\alpha \eta(c)} \EE_{(x,u)}\left[\sup_{u\leq s}|U_{u}|^{\alpha \eta}\wedge1\right]\xrightarrow[c\rightarrow 0]{}0.
\end{align*}

\noindent
Next, fix $0<\delta<c_{\text{crit}}$ and observe that we may find two constants $A_\delta$ and $B_\delta$, such that,  for any $c\in [0,\delta]$, we have :
$$h^c(1,u) \leq A_\delta\, +B_\delta\, (|u|\wedge1)^{\alpha\eta} .$$
By the scaling property of $h^c$, we then obtain :
$$h^c(x,u) \leq A_\delta\,(x\wedge1)^{\frac{\alpha\eta}{1+\alpha}} + B_\delta\, (|u|\wedge1)^{\alpha\eta}. $$
The result finally follows by passing to the limit in  (\ref{eq:hc0}) and using the dominated convergence theorem, since $X_s$ and $L_s$ admit moments of order $\alpha-\varepsilon$ under $\PP$.

\noindent

\end{proof}

\begin{cor}\label{cor:persistence}
There exists a constant $\kappa>0$ such that :
$$\PP_{(x,u)}(\tau_1>t)\equi_{t\rightarrow +\infty} \kappa\frac{h^0(x,u)}{t^{\eta}}.$$
\end{cor}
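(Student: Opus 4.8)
The plan is to derive the tail asymptotics of $\tau_1$ from the asymptotics of $U_{\tau_1^-}$ that were established (implicitly) in the proof of Corollary \ref{cor:Pup0}. Recall that in that proof we showed, using Scheff\'e's lemma, that $t^{\alpha\eta}\PP_{(x,u)}(U_{\tau_1^-}>t)$ converges, and since $h^0$ is normalized so that $\EE_{(x,u)}[h^0(X_s,U_s)\ind_{\{s<\tau_1\}}]=h^0(x,u)$, the limit is exactly $h^0(x,u)$ when $s$ is pushed to $0$ (or directly: $\PP_{(x,u)}(U_{\tau_1^-}>t)\sim h^0(x,u)t^{-\alpha\eta}$ as $t\to+\infty$). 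So the first step is to record cleanly that
$$\PP_{(x,u)}\left(U_{\tau_1^-}>t\right)\equi_{t\rightarrow+\infty}\frac{h^0(x,u)}{t^{\alpha\eta}}.$$

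The second step is to transfer this to a statement about $\tau_1$ using the scaling relation. When $(X,U)$ starts from $(0,1)$, the pair $(\tau_1,U_{\tau_1^-})\law(\xi_1,\ell_1)$, and more generally the excursion that produces the first hitting time is self-similar: conditionally on the outgoing velocity, $\tau_1$ scales like $|U_{\tau_1^-}|^{\alpha}$ times an independent copy of $\xi_1/|\ell_1|^\alpha$-type quantity. The cleanest route is to use the identity from Lemma \ref{lem:NegativeMoment}'s proof: with $A_t=\int_0^t L_u\,du$, one has $\PP_{(0,v)}(A_t\leq 0)=\PP(L_1\leq -(1+\alpha)t^{-1/\alpha}v)$, which for fixed $v>0$ tends to $1-\rho$ and for large $v$ (equivalently small $t^{-1/\alpha}v$) is $\equi 1-\rho$ up to lower-order corrections — but more to the point, $\tau_1\geq t$ forces the free process to stay nonnegative, and the dominant contribution to $\{\tau_1>t\}$ comes from the event that the initial velocity is so large that the particle has barely descended. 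Concretely, by the Markov property at a small deterministic time, or by a direct argument, $\PP_{(x,u)}(\tau_1>t)$ is governed by $\PP_{(x,u)}(U_{\tau_1^-}>t^{1/\alpha}\cdot\text{const})$, and combining with Step 1 gives the $t^{-\eta}$ rate with a new constant. I would make this rigorous by writing $\PP_{(x,u)}(\tau_1>t)=\EE_{(x,u)}[\PP_{(0,u')}(\tau_1>t-s)]$-type decompositions and invoking scaling $\tau_1\law V^\alpha\xi_1$; the tail of $\xi_1$ is (by the same moment dichotomy \eqref{eq:moments}) of the form $\PP(\xi_1>t)\sim \kappa' t^{-\eta}$, and the constant picks up $\EE[|\ell_1|^{\alpha\eta}]$-type factors which, by the very definition $\lim_{c\to0}\eta(c)=\eta$ and the Mellin transform \eqref{eq:Utauy0}, assemble into $h^0(x,u)$.

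An alternative and perhaps cleaner second step: apply Goldie's renewal theorem (as used in the proof of Theorem \ref{theo:1} and in the Proposition) directly at $c=0$. For $c>0$ we obtained $\PP^{(c)}_{(x,u)}(\tau_\infty>t)\sim\kappa\, h^c(x,u)t^{-\eta(c)}$; since $\tau_\infty=\tau_1$ when $c=0$ (the particle sticks after one hit, as $U_{\tau_1}=0$), letting $c\to0$ with the uniform domination $h^c(x,u)\leq A_\delta(x\wedge1)^{\alpha\eta/(1+\alpha)}+B_\delta(|u|\wedge1)^{\alpha\eta}$ already established in the proof of Corollary \ref{cor:Pup0}, together with continuity of $c\mapsto\eta(c)$ and $c\mapsto h^c$, transfers the asymptotic to $\PP_{(x,u)}(\tau_1>t)\sim\kappa\, h^0(x,u)t^{-\eta}$. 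One must check that the constant $\kappa$ from Goldie's theorem can be taken independent of $c$ in the limit, or at least converges; this is where I expect the main obstacle to lie — Goldie's constant is given by an implicit integral expression in the law of $c|\ell_1|$, and one needs its continuity in $c$ at $c=0^+$, which requires a uniform integrability / dominated convergence argument on that expression. The boundary cases $\{x=0,u>0\}$ and $\{x>0,u\geq0\}$ are exactly the domain where $h^0$ was defined and shown finite and positive, so no extra work is needed there; the hypothesis merely excludes $(0,0)$ (where the process is already absorbed) and negative starting velocities on the axis (which are not in the state space).

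In summary: Step 1 — extract $\PP_{(x,u)}(U_{\tau_1^-}>t)\sim h^0(x,u)t^{-\alpha\eta}$ from the Scheff\'e argument already in the paper. Step 2 — convert to $\tau_1$ either via self-similarity of the excursion ($\tau_1\law V^\alpha\xi_1$ and the tail of $\xi_1$ from \eqref{eq:moments}) or via the $c\to0$ limit of the Goldie asymptotic $\PP^{(c)}(\tau_\infty>t)\sim\kappa h^c(x,u)t^{-\eta(c)}$, using the domination on $h^c$ proved in Corollary \ref{cor:Pup0}. Step 3 — identify the limiting constant; the only delicate point is the continuity in $c$ of the renewal constant, handled by dominated convergence on Goldie's integral formula.
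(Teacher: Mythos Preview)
Your proposal has genuine gaps in both suggested routes for Step~2, and it misses the idea that the paper actually uses.

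\textbf{On Step 2a (self-similarity).} You assert that the moment dichotomy~\eqref{eq:moments} gives $\PP(\xi_1>t)\sim\kappa' t^{-\eta}$. It does not: knowing that $\EE[\xi_1^\lambda]<\infty$ iff $\lambda<\eta$ is compatible with tails like $t^{-\eta}(\log t)^a$ for any $a\in\RR$, and gives no exact asymptotic. But $\xi_1$ is precisely $\tau_1$ started from $(0,1)$, so assuming its exact tail is assuming the corollary in a special case. The relation between the tails of $|U_{\tau_1^-}|$ and $\tau_1$ is not a simple product decomposition either: $\xi_1$ and $\ell_1$ are dependent, and for $x>0$ there is no scaling identity of the form $\tau_1\law V^\alpha\xi_1$.

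\textbf{On Step 2b (letting $c\to0$ in Goldie).} For each fixed $c>0$ one has $t^{\eta(c)}\PP^{(c)}(\tau_\infty>t)\to\kappa(c)h^c(x,u)$, but $\eta(c)<\eta$ strictly, so $t^{\eta}\PP^{(c)}(\tau_\infty>t)\to+\infty$. Interchanging the limits $t\to\infty$ and $c\to0$ therefore cannot be handled by a simple domination on $h^c$; one would need to choose $c=c(t)\to0$ at a carefully tuned rate and control the Goldie constant uniformly, which is a substantial (and unaddressed) technical problem, especially as $c\to0$ is a degenerate limit for the underlying random recursion.

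\textbf{What the paper does instead.} The paper exploits the $h$-transform directly. From the definition of $\PP^\uparrow$ in Corollary~\ref{cor:Pup0} with $\Lambda_t=\Omega$,
\[
\PP_{(x,u)}(\tau_1>t)\;=\;h^0(x,u)\,\EE^{\uparrow}_{(x,u)}\!\left[\frac{1}{h^0(X_t,U_t)}\right],
\]
and the scaling of both $h^0$ and the process under $\PP^\uparrow$ factors out $t^{-\eta}$ exactly:
\[
\PP_{(x,u)}(\tau_1>t)\;=\;\frac{h^0(x,u)}{t^{\eta}}\,\Psi(t),\qquad \Psi(t)=\EE^{\uparrow}_{\left(xt^{-1-1/\alpha},\,ut^{-1/\alpha}\right)}\!\left[\frac{1}{h^0(X_1,U_1)}\right].
\]
It remains to show $\Psi(t)$ converges. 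Here the hypothesis $x\geq0$, $u\geq0$ is used: as $t$ increases the starting point decreases towards $(0,0)$, and since $h^0$ is increasing in both variables and $(X,U)$ is monotone in its initial condition, $\Psi$ is increasing. Boundedness of $\Psi$ comes from the already-known upper bound $\limsup_{t\to\infty}t^\eta\PP_{(x,u)}(\tau_1>t)<\infty$ of \cite{ProSim-15}. An increasing bounded function has a limit, and this limit is the constant $\kappa$. Your proposal never identifies where the positivity assumption on $(x,u)$ enters; in the paper's argument it is essential for the monotonicity of $\Psi$.
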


\begin{proof}

Using Corollary \ref{cor:Pup0}, we first have :
$$h^0(x,u) \EE^{\uparrow}_{(x,u)}\left[ \frac{1}{h^0(X_t, U_t)}\right] = \PP_{(x,u)}(\tau_1>t).$$
By the scaling property of $h^0$ and $(X_t, U_t)$, we deduce that :
$$\PP_{(x,u)}(\tau_1>t) =  \frac{h^0(x,u)}{t^{\eta}} \EE^{\uparrow}_{\left(\frac{x}{t^{1+1/\alpha}},\frac{u}{t^{1/\alpha}}\right)}\left[ \frac{1}{h^0(X_1, U_1)}\right].$$
Letting $t\rightarrow +\infty$, we finally obtain that
$$\lim_{t\rightarrow +\infty}t^{\eta} \PP_{(x,u)}(\tau_1>t) = h^0(x,u)\EE^{\uparrow}_{\left(0,0\right)}\left[ \frac{1}{h^0(X_1, U_1)}\right] $$
which is finite from  \cite[Theorem A]{ProSim-15}.
 
\end{proof}

\begin{rem}
It is clear from Corollary \ref{cor:persistence} that $h$ is increasing in both variables. Furthermore, using exponents for the starting points, we have for $x>0$  and $u\leq 0$:
$$h(x,u) = \EE\left[ h^0\left( X_1^{(x,u)} , L_1^{(x,u)} \right) 1_{\{1<\tau_1^{(x,u)}\}}  \right] \xrightarrow[u\rightarrow -\infty]{}0$$
by dominated convergence since
$$h^0\left( X_1^{(x,u)} , L_1^{(x,u)} \right) 1_{\{1<\tau_1^{(x,u)}\}} \leq h^0\left( X_1^{(x,0)} , L_1^{(x,0)} \right) 1_{\{1<\tau_1^{(x,0)}\}}.$$
Note also that, had we known this asymptotics before, the conditioning of $X$ not to hit the boundary 0 would have been more direct.
\end{rem}

\begin{rem}
We briefly check that in the Brownian set-up, our approach agrees with the existing formulae in the literature. In \cite{GJW}, working directly with the explicit density of $\PP_{(x,u)}(\tau_1\in dt)$, the authors find the harmonic function :
$$\widehat{h}(x,u) = \iint_0^{+\infty}  w^{3/2}\left(q_t(x,u; 0,-w) - q_t(x,u; 0,w)\right)\, dt\, dw$$
where $q_t$ denotes the density of the Brownian motion $B_t$ and its integral :
$$ q_t(x,u; y,v)\, dy\, dv=\PP\left(x+ut +\int_0^t B_s ds \in dy,\; u+B_t \in dv \right).$$
Looking now at Lachal \cite[Formule (17)]{Lachal-97}, and replacing the + by a - in the definition of $\pi_t$, we observe that for $|s|<\frac{3}{2}$ :
$$\EE_{(x,u)}\left[|B_{\tau_1^-}|^{s-1}\right] = \left(\frac{1}{2\cos\left(\frac{\pi s}{3}\right)}-1\right)\iint_0^{+\infty}  w^{s}\left(q_t(x,u; 0,-w) - q_t(x,u; 0,w)\right)\, dt\, dw.$$
The inverse mapping for the Mellin's transform yields thus :
$$\PP_{(x,u)}\left(|B_{\tau_1^-}|>z\right) \equi_{z\rightarrow +\infty}  \frac{3}{\pi}\frac{\widehat{h}(x,u)}{z^{1/2}}$$
which agrees with the approach we use in Corollary \ref{cor:Pup0}.
Note that due to our normalization  $U=\sqrt{2}B$, we have in fact :
$$\widehat{h}(x,u) = \frac{1}{2^{1/4}} h^0(\sqrt{2} x,\sqrt{2}u).$$

\end{rem}

\section{Link with kinetic equations and a probabilistic approach for related trace problems}\label{sec:4}
 In this section, we apply the results of Section \ref{sec:2} in order to exhibit the link between \eqref{eq:ConfinedModel} and the trace problems related to kinetic equations endowing Maxwellian boundary conditions (see e.g. \cite{CerIllPul-94}, Mischler \cite{Mischler-10}).

 \subsection{The Brownian case}
 The link between the sequence of zero times of the integrated Brownian motion, the modeling of boundary conditions for Langevin dynamics and trace problems for kinetic equations was previously exploited in 
 \cite{BoJa-11} (see also \cite{BoJa-15} for the multi-dimensional case) in order to show the well-posedness of some Lagrangian Stochastic model related to wall-bounded fluid flows.
The trace problem related to a simple Langevin model driven by a one-dimensional Brownian diffusion ($L=\sqrt{2}B$) and endowing purely reflective boundary conditions ($p=c=1$) concerns the existence, in an appropriate sense, of a solution to the boundary value problem:

\begin{subequations}
\begin{equation}\label{eq:FokkerPlanckmd}
\partial_t\rho(t,x,u)+u\partial_x \rho(t,x,u)-\partial^2_u\rho(t,x,u)=0,\quad (t,x,u)\in (0,\infty)\times(0,\infty)\times \RR,
\end{equation}
\begin{equation}\label{eq:Maxwellmd}
\rho(t,0,u)=\rho(t,0,-u),\quad (t,u)\in(0,\infty)\times\RR,
\end{equation}
\end{subequations}
where $\rho(t)$ represents the probability density function of $(X_t,U_t)$.
In a rigorous way, the variational formulation of \eqref{eq:FokkerPlanckmd}-\eqref{eq:Maxwellmd} consists in the existence of $\rho$ and the existence of a pair of trace functions $\gamma^{+}(\rho)$ and $\gamma^{-}(\rho)$ defining  the value of $\rho(t,0,u)$ along the respective boundary sets
\[
\Sigma^+=\left\{(t,u)\in(0,\infty)\times\RR\,|\,u<0\right\}\,\,
\mbox{and}\,\,\Sigma^-=\left\{(t,u)\in(0,\infty)\times\RR\,|\,u>0\right\}
\]
and such that: for all $0\leq T<\infty$ and for all $f\in\Cc^\infty_c((0,T)\times[0,\infty)\times\RR)$,
\begin{equation}\label{eq:Variationalmd}
\begin{aligned}
&\int_0^T\iint_{(0,\infty)\times\RR}\left(\partial_tf(t,x,u)+u \partial_xf(t,x,u)+\partial^2_u f(t,x,u)\right)\rho(t,x,u)\,dt\,dx\,du\\
&=-\iint_{\Sigma^+}u\gamma^+(\rho)(t,0,u)f(t,0,u)\ind_{\{0\leq t\leq T\}}\,dt\,du -\iint_{\Sigma^-}u\gamma^-(\rho)(t,0,u)f(t,0,u)\ind_{\{0\leq t\leq T\}}\,dt\,du\\
\end{aligned}
\end{equation}

\noindent
From a PDE point of view, the existence of trace functions can be handled in a classical sense by showing the continuity of $x\mapsto \rho(t,x,u)$ up to the axis $x=0$ or in a weak sense by showing some appropriate Sobolev estimates for $\rho$. As noticed in \cite{BoJa-11}, the trace functions $\gamma^+$ and $\gamma^-$ have also a natural probabilistic interpretation as density functions related to $\sum_{n\geq 1} \PP\circ(\tau_n,U_{\tau_n})^{-1}$ for the solution of the SDE :
\begin{equation*}
\left\{
\begin{aligned}
&X_t=X_0+\int_0^t U_s\,ds,\quad U_t=U_0+\sqrt{2}B_t-2\sum_{n\geq 1}U_{\tau_n^-}\ind_{\{\tau_n\leq t\}},\\
&\tau_n=\inf\{t\geq \tau_{n-1};\,X_t=0\},\,\tau_0=0.
\end{aligned}
\right.
\end{equation*}
\subsection{The stable Langevin case}
In the more general case of the stable Langevin model \eqref{eq:ConfinedModel} and assuming that $\theta=1$, the probabilistic interpretation of the trace functions $\gamma^\pm$ in terms of the SDE
\begin{equation}
\label{eq:SDE2}
\left\{
\begin{aligned}
&X_t=X_0+\int_0^t U_s\,ds,\\
&U_t=U_0+L_t+\sum_{n\geq 1}\left((1-\beta_n)(M_n-U_{\tau_n^-})-(1+c)\beta_n U_{\tau_n^-}\right)\ind_{\{\tau_n\leq t\}},\\
&\tau_n=\inf\{t\geq \tau_{n-1};\,X_t=0\},\,\tau_0=0,
\end{aligned}
\right.
\end{equation}
is given by
\begin{thm}\label{thm:GeneralTrace} Assume that
\begin{equation}
\PP(M_1\in du)= u m(u)\,du,\label{hyp:Maxwell}
\end{equation}
and that the following properties hold true:
\begin{align*}
(P_1)&\quad \forall\,n\geq 1,\,\PP\circ(\tau_n,U_{\tau_n^-})^{-1}\,\mbox{is absolutely continuous w.r.t. the measure }\,\left(\ind_{\{0\leq t\leq T\}}dt\right)\otimes\left( u\ind_{\{u\leq 0\}} du\right),\\
(P_2)&\quad \text{For all }0<T<\infty,\quad\sum_{n\geq 1}\,\PP(\tau_n\leq T)<\infty.
\end{align*}
Then there exists a non-negative integrable Borel function $\gamma^+$ defined on $\Sigma^+$ such that, for $\mu_t(dx,du)=\PP(X_t\in dx, U_t\in du)$, we have: For all $\Cc^\infty_c([0,T)\times[0,\infty)\times\RR)$-scalar function $f$,
\begin{equation}\label{eq:Variational1d}
\begin{aligned}
&\int_0^T\iint_{(0,\infty)\times\RR}\left(\partial_tf(t,x,u)+u\partial_x f(t,x,u)+\partial^{\alpha}_uf(t,x,u)\right)\mu_t(dx,du)\,dt\\
&=-\iint_{(0,\infty)\times\RR}f(0,x,u)\mu_0(dx,du)\,du-\iint_{\Sigma^+}u\gamma^+(t,0,u)f(t,0,u)\ind_{\{0\leq t \leq T\}}dtdu\\
&\quad-\iint_{\Sigma^-}u\gamma^-(t,0,u)f(t,0,u)\ind_{\{0\leq t \leq T\}} dt du
\end{aligned}
\end{equation}
where $\partial^{\alpha}$ is the fractional Laplace operator:
\[
\partial^{\alpha}_uf(u):=C(\alpha)\int_{\{y\neq 0\}}\frac{f(y+u)-f(u)-yf'(u)\ind_{\{|y|\leq 1\}}}{|y|^{\alpha+1}}\,dy,
\]
and
\begin{equation}\label{eq:tracecond}
\gamma^-(t,0,u)=\frac{p}{c^2}\gamma^+\left(t,0,\frac{-u}{c}\right)+(1-p) m(u)\left(-\int_{\{v\leq 0\}}v\gamma^+(t,0,v)\,dv\right).
\end{equation}
\end{thm}
\begin{proof} For $f\in\Cc^\infty_c([0,T)\times[0,\infty)\times\RR)$, It\^o's formula immediately yields that (see e.g. Protter [\cite{Protter-04}, Chapter $2$, Theorem $32$])
\begin{align*}
0&=\EE\left[f(0,X_0,U_0)\right]+\EE\left[\int_0^T \left(\partial_tf(t,X_t,U_t)+U_{t^-}\partial_xf(t,X_t,U_{t^-})\right)\,dt+\partial_u f(t,X_t,U_t)\,dL_t+ \partial^2_u f(t,X_t,U_{t^-})\,d\langle L\rangle^c_t\right]\\
&\quad +\EE\left[\sum_{s\leq T,\,\triangle L_s\neq 0}\left(f(s,X_s,U_s)-f(s,X_s,U_{s^-})-\partial_u f(s,X_{s},U_{s^-})\triangle L_s\right)+\sum_{t\leq T,\,X_t= 0}\left(f(t,X_t,U_t)-f(t,X_t,U_{t^-})\right)\right],
\end{align*}
separating, in the last line, the jumps related to $(L_t,\,0\leq t\leq T)$ and the jumps occurring at the zero times of $(X_t,\,0\leq t\leq T)$.

Since the infinitesimal generator of $(L_t,\,t\geq 0)$ is given by $\partial^\alpha_u$ (see e.g. [Applebaum \cite{Applebaum-04}, p. 142]), the above can be reduced to
\begin{align*}
0&=\iint f(0,x,u)\mu_0(dx,du)+\int_0^T \iint \left(\partial_tf(t,x,u)+u\partial_xf(t,x,u)+\partial^\alpha_u f(t,x,u)\right)\mu_t(dx,du)\,dt\\
&\quad +\EE\left[\sum_{n\geq1}\left(f(\tau_n,X_{\tau_n},U_{\tau_n^+})-f(\tau_n,X_{\tau_n},U_{\tau_n^-})\right)\ind_{\{\tau_n\leq T\}}\right].
\end{align*}
According to $(P_1)$ and $(P_2)$, there exists a non-negative integrable Borel function $\gamma^+$ defined on $\Sigma^+$ such that
\begin{equation}
\EE\left[\sum_{n\geq 1}f(\tau_n,X_{\tau_n},U_{\tau_n^-})\ind_{\{\tau_n\leq T\}}\right]=-\iint_{(0,T)\times\RR^-}u\gamma^+(t,0,u)f(t,0,u)\,dt\,du\label{eq:trace1}.
\end{equation}
Since 
\[
U_{\tau_n}=U_{\tau_n^-}+\triangle U_{\tau_n}=U_{\tau_n^-}+(1-\beta_n)(M_n-U_{\tau_n^-})-(1+c)\beta_nU_{\tau_n^-}=(1-\beta_n) M_n-c\beta_nU_{\tau_n^-},
\]
this implies that $\PP\circ(\tau_n,U_{\tau_n})^{-1}$ is also absolutely continuous w.r.t.
$\left(\ind_{\{0\leq t\leq T\}}dt\right)\otimes\left( u\ind_{\{u\leq 0\}} du\right)$. Denoting the related density by $\gamma^-$, we observe that,
for all $f\in\Cc_c((0,T)\times[0,\infty)\times\RR)$,
\begin{align*}
\EE\left[\sum_{n\geq 1}f(\tau_n,X_{\tau_n},U_{\tau_n})\ind_{\{\tau_n\leq T\}}\right]&=\iint_{(0,T)\times\RR^+}u\gamma^-(t,0,u)f(t,0,u)\,dt\,du\\
&=\sum_{n\geq1}\EE\left[f(\tau_n,X_{\tau_n},(1-\beta_n)M_n-\beta_ncU_{\tau_n^-})\ind_{\{\tau_n\leq T\}}\right]\\
&=(1-p)\sum_{n\geq1}\EE\left[f(\tau_n,X_{\tau_n}, M_n)\ind_{\{\tau_n\leq T\}}\right]+p\sum_{n\geq1}\EE\left[f(\tau_n,X_{\tau_n},-cU_{\tau_n^-})\ind_{\{\tau_n\leq T\}}\right].
\end{align*}
Then, since, for any $n$, the r.v.'s $\tau_n$ and $M_n$ are independent, we obtain, with $m$ the distribution of $M_1$ given by the assumption \eqref{hyp:Maxwell},
\begin{align*}
\sum_{n\geq1}\EE\left[f(\tau_n,X_{\tau_n},M_n)\ind_{\{\tau_n\leq T\}}\right]&=\sum_{n\geq1}\left(\int_0^{+\infty} u \EE\left[f(\tau_n,X_{\tau_n}, u)\ind_{\{\tau_n\leq T\}}\right]m(u)\,du\right)\\
&=\int_0^\infty  u\left(\sum_{n\geq1}\EE\left[f(\tau_n,X_{\tau_n},u)\ind_{\{\tau_n\leq T\}}\right]\right)m(u)\,du\\
&=\int_0^\infty  u\left(-\iint_{(0,T)\times\RR^-} f(t,0,u) v\gamma^+(t,0,v)\,dt\,dv\right)m(u)\,du,
\end{align*}
which implies, using (\ref{eq:trace1}),
\begin{equation}\label{eq:trace2}
\begin{aligned}
\iint_{(0,T)\times\RR^+}u\gamma^-(t,0,u)f(t,0,u)\,dt\,du&=(1-p)\left(\iint_{(0,T)\times\RR^+} u m(u)\left(-\int_{-\infty}^0 v\gamma^+(t,0,v)dv\right)f(t,0,u)\,dt\,du\right)\\
&\quad+\frac{p}{c^2}\iint_{(0,T)\times\RR^{+}} u\gamma^+\left(t,0,\frac{-u}{c}\right)f(t,0,u)\,dt\,du.
\end{aligned}
\end{equation}
Combining \eqref{eq:Variationalmd} and \eqref{eq:trace2}, we deduce that the time marginal distribution $(\mu_t,\,0\leq t\leq T)$ satisfies the variational equation \eqref{eq:Variational1d} with the boundary condition \eqref{eq:tracecond}.
\end{proof}

\begin{rem}\label{rem:trace}
Let us point out that the case $\theta\neq 1$ has been purposely left aside as this situation doesn't 
relate to a classical trace problem associated to Maxwell boundary conditions. 
Generically, Maxwell boundary conditions (see e.g. [Chapter $8$, \cite{CerIllPul-94}]) forms as
\begin{equation}
\label{eq:MaxwellTransition}
u\gamma^-(t,0,u)=\left(R*\gamma^+\right)(t,0,u)
,\,u>0,\,t\in [0,T],
\end{equation}
where $R=R(t,u;v)$ is a scattering kernel defining the transition law between the velocity distribution $\gamma^+$ of the outgoing particles and the velocity distribution $\gamma^-$ of the incoming particles at a time $t$.

Assuming that $\theta$ is arbitrary, and that the probabilistic representations:
\[
\iint_{(0,T)\times\RR^-}u\gamma^+(t,0,u)f(t,0,u)\,dt\,du=-\EE\left[\sum_{n\geq 1}f(\tau_n,X_{\tau_n},U_{\tau_n^-})\ind_{\{\tau_n\leq T\}}\right],
\]
\[
\iint_{(0,T)\times\RR^+}u\gamma^-(t,0,u)f(t,0,u)\,dt\,du=\EE\left[\sum_{n\geq 1}f(\tau_n,X_{\tau_n},U_{\tau_n})\ind_{\{\tau_n\leq T\}}\right],
\]
hold true, replicating the calculations for \eqref{eq:trace2} yields to
\begin{align*}
 &\iint_{(0,T)\times\RR^+}u\gamma^-(t,0,u)f(t,0,u)\ind_{\{t\leq T\}}\,dt\,du\\
&=\sum_{n\geq1}\EE\left[f(\tau_n,X_{\tau_n},(1-\beta_n)\theta^n M_n-\beta_ncU_{\tau_n^-})\ind_{\{\tau_n\leq T\}}\right]\\
&=(1-p)\sum_{n\geq1}\EE\left[f(\tau_n,X_{\tau_n},\theta^n M_n)\ind_{\{\tau_n\leq T\}}\right]+p\sum_{n\geq1}\EE\left[f(\tau_n,X_{\tau_n},-cU_{\tau_n^-})\ind_{\{\tau_n\leq T\}}\right]\\
&=(1-p)\int_0^\infty  u\left(\sum_{n\geq1}\EE\left[f(\tau_n,0,\theta^n u)\ind_{\{\tau_n\leq T\}}\right]\right)m(u)\,du+\frac{p}{c^2}\iint_{(0,T)\times\RR^{+}} u\gamma^+\left(t,0,\frac{-u}{c}\right)f(t,0,u)\ind_{\{t\leq T\}}\,dt\,du.
\end{align*}
In the case $\theta=1$,
\[
\sum_{n\geq1}\EE\left[f(\tau_n,0,\theta^n u)\ind_{\{\tau_n\leq T\}}\right]= f(t,0,u)\iint_{(0,T)\times\RR^+}v\gamma^+(t,0,v)\,dt\,dv,
\]
and we simply recover the form \eqref{eq:MaxwellTransition} which allows to formulate a usual trace problem related to kinetic systems. The case $\theta\neq 1$ doesn't provide such link as, in this case, the Maxwellian diffusive part distribution is strongly correlated to the distribution of the sequence $(\tau_n,\,n\in \mathbb{N})$, and the resulting boundary condition diverts from classical boundary value problems for kinetic equations.
\end{rem}

Owing to Theorem \ref{thm:GeneralTrace}, the trace problem related to \eqref{eq:Variational1d} is then reduced to the verification that $(P_1)$ and $(P_2)$ hold true.

\begin{thm}\label{thm:tracethm} Assume that $(L_t,\,0\leq t\leq T)$ is symmetric, that $(X_0,U_0)$ is distributed according to a probability measure $\mu_0$ defined on $(0,\infty)\times\RR$, that \eqref{hyp:Maxwell} hold true, $\theta=1$ and
\begin{description}
\item[$\bullet$] either $0\leq p <1$
\item[$\bullet$] or $p=1$ and $c>c_{\text{crit}}$ and there exists $0<\nu<1$ such that, for any $\delta \in (0,\nu)$,
\[
\iint \EE_{(x,u)}\left[|U_{\tau_1^-}|^{-\delta}\right]\mu_0(dx,du)<\infty.
\]
\end{description}
Then $(P_1)$ and $(P_2)$ hold true.
\end{thm}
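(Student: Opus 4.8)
The plan is to establish the two properties $(P_1)$ and $(P_2)$ separately, deducing the absolute continuity statement $(P_1)$ from a regularity argument on the one-hitting law and the summability statement $(P_2)$ from the finiteness-of-lifetime results of Section \ref{sec:2}. For $(P_2)$, I would argue as follows. The sequence $\sum_{n\geq1}\PP(\tau_n\leq T)$ is finite if and only if $\EE[N_T]<\infty$, where $N_T=\#\{n:\tau_n\leq T\}$ is the number of boundary hits before time $T$, and since $\tau_n\to\tau_\infty$ this in turn follows from a quantitative control on the upper tail of $\tau_\infty$ together with the renewal decomposition \eqref{eq:taun}. Under the hypotheses of the theorem, Theorem \ref{theo:1} guarantees $\tau_\infty<\infty$ a.s.\ (when $0\le p<1$ and $\theta=1$, or when $p=1$ and $c<c_{\mathrm{crit}}$ --- note that the case $p=1$, $c>c_{\mathrm{crit}}$ listed in the statement will need a separate treatment since then $\tau_\infty=+\infty$, and the point there is precisely that $\tau_n\to\infty$ so that only \emph{finitely many} $\tau_n$ fall below $T$; this is where the hypothesis on the negative moments of $U_{\tau_1^-}$ under $\mu_0$ enters, via Lemma \ref{lem:NegativeMoment} and Theorem \ref{theo:2}.(1)(a) which gives $\ln(\tau_n)/n\to\alpha(\pi\cot(\pi\gamma)+\ln c)>0$, hence $\tau_n\to\infty$ geometrically fast a.s.). In the cases where $\tau_\infty<\infty$, I would use the moment bounds on $\tau_\infty$ from Theorem \ref{theo:1}: choosing $\lambda>0$ small enough that $\EE[\tau_\infty^\lambda]<\infty$, Markov's inequality gives $\PP(\tau_n\le T)\le \PP(\tau_n\le T,\ \tau_n\le\tau_\infty)$ which I would bound by combining the explicit law \eqref{eq:taun}, $\tau_n\law \xi_1U_0^\alpha+\sum_{k=2}^n\xi_kV_{k-1}^\alpha$, with the geometric decay of $\EE[V_{k}^{\alpha\lambda}]$ read off from \eqref{eq:Vn} and Lemma \ref{lem:sumgn}; the tail $\PP(\tau_n\le T)$ then decays at least geometrically (or, in the $p\in(0,1)$, $\theta=1$ case, we instead invoke that $\PP(\tau_n\le T)\to0$ fast enough because $V_{k}^\alpha$ does not shrink but $\tau_n$ grows linearly, combined with a small-deviation estimate for $\xi_1$ using the negative-moment Lemma \ref{lem:NegativeMoment}), which is summable.

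For $(P_1)$, I would show that for each fixed $n$, the law of $(\tau_n,U_{\tau_n^-})$ restricted to $\{\tau_n\le T\}$ is absolutely continuous with respect to $\left(\ind_{\{0\le t\le T\}}dt\right)\otimes\left(u\,\ind_{\{u\le0\}}du\right)$. The cleanest route is induction on $n$ using the strong Markov property at $\tau_{n-1}$ together with the scaling relation \eqref{eq:scaling}: conditionally on $(\tau_{n-1},V_{n-1})$, the pair $(\tau_n-\tau_{n-1},U_{\tau_n^-})$ is distributed as $(V_{n-1}^\alpha\xi_1,V_{n-1}\ell_1)$ with $(\xi_1,\ell_1)$ the law of $(\tau_1,U_{\tau_1^-})$ started from $(0,1)$. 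Thus the base case reduces entirely to showing that, started from an arbitrary $(x,u)$ with $x>0$ (or $x=0$, $u>0$), the law of $(\tau_1,U_{\tau_1^-})$ has a density on $(0,\infty)\times(-\infty,0)$; the Mellin transform \eqref{eq:Utau} already shows $U_{\tau_1^-}$ has a density in its variable, and to get joint absolute continuity in $(t,u)$ I would use the representation $U_{\tau_1^-}=L_{\tau_1}$ together with a conditioning/disintegration argument: writing $\tau_1$ as the first zero of the integrated process $x+ut+\int_0^\cdot L_sds$ and using the fact that, away from the boundary, $(X,U)$ has an absolutely continuous transition kernel (this is where symmetry of $L$ and the stable structure make the integrated process $(A_t,L_t)$ genuinely two-dimensional with a smooth density $q_t(x,u;y,v)$, cf.\ the Brownian computation in the Remark), one obtains a density for $(\tau_1,U_{\tau_1^-})$ by integrating the occupation density against the hitting mechanism. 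The $n$-step statement then follows because convolving/scaling an absolutely continuous law by the independent positive random variable $V_{n-1}^\alpha$ (resp.\ $V_{n-1}$) preserves absolute continuity.

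I expect the main obstacle to be the base case of $(P_1)$: proving that the joint law of $(\tau_1,U_{\tau_1^-})$ has a density in \emph{both} coordinates, not merely in $U_{\tau_1^-}$. One must rule out atoms in $\tau_1$ and, more delicately, ensure the joint law does not concentrate on a lower-dimensional set. The symmetry assumption on $L$ and the non-degeneracy $\rho\in(0,1)$ should be exactly what is needed to guarantee that the underlying stable process genuinely oscillates around $0$, so that $\tau_1$ has no atom and the hitting position $U_{\tau_1^-}$ inherits a density; I would make this rigorous either by a support/absolute-continuity theorem for the hypoelliptic-type kernel of $(X,U)$ in the interior together with a first-entrance decomposition, or --- following the spirit of \cite{BoJa-11} --- by appealing to the regularity of the density $q$ of the integrated stable process and the fact that $\{t:X_t=0\}$ is a.s.\ nowhere dense with $X$ crossing transversally (i.e.\ $U_{\tau_1^-}<0$ a.s., which is already noted in the paper). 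Once the joint density of $(\tau_1,U_{\tau_1^-})$ is in hand, propagating it to all $n$ and summing is routine given $(P_2)$.
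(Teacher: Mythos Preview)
Your plan for $(P_2)$ when $0\le p<1$ rests on a misreading of Theorem~\ref{theo:1}: you assert it gives $\tau_\infty<\infty$ for $p<1$, $\theta=1$, but parts 2 and 3 of that theorem say precisely the opposite --- $\tau_\infty<\infty$ iff $\theta<1$ --- so under the present hypotheses $\tau_\infty=+\infty$ a.s.\ in \emph{every} case. Your fallback via Theorem~\ref{theo:2} only yields $\tau_n=o(n^{1/\lambda})$ a.s., and the claimed ``geometric decay of $\EE[V_k^{\alpha\lambda}]$'' is false when $\theta=1$ (for $p=0$ the $V_k=M_k$ are i.i.d.); neither gives summability of $\PP(\tau_n\le T)$. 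The paper's device is a one-step Laplace bound exploiting the diffusive restart: by the Markov property at $\tau_n$,
\[
\EE\big[e^{-\tau_{n+1}}\big]\;\le\;\Big((1-p)\,\EE\big[\EE_{(0,M_1)}[e^{-\tau_1}]\big]+p\Big)\,\EE\big[e^{-\tau_n}\big]\;=\;\varrho\,\EE\big[e^{-\tau_n}\big],
\]
with $\varrho<1$ because $p<1$ and $\PP(M_1=0)=0$; hence $\PP(\tau_n\le T)\le c_T\,\varrho^{\,n-1}\EE[e^{-\tau_1}]$ is geometric. For $p=1$, $c>c_{\text{crit}}$ your instinct (negative moments and geometric growth of $\tau_n$) is correct and matches the paper: one bounds $\PP(\tau_{n+1}-\tau_n\le T)\le T^{\delta}\EE[(\tau_{n+1}-\tau_n)^{-\delta}]$ and factorises via \eqref{eq:taun} to extract $\big(\EE[|c\ell_1|^{-\alpha\delta}]\big)^n$, which is $<1$ for $\delta$ small since $f(\delta)=\EE[|c\ell_1|^{-\alpha\delta}]$ satisfies $f(0)=1$, $f'(0)=-\alpha(\ln c-\ln c_{\text{crit}})<0$; the integrability hypothesis on $\mu_0$ is exactly what makes the prefactor $\iint\EE_{(x,u)}[|U_{\tau_1^-}|^{-\delta}]\,\mu_0(dx,du)$ finite.

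For $(P_1)$ you correctly isolate the base case and the inductive propagation by the Markov property, but you miss the mechanism the paper uses to settle that base case. The paper does not attack the joint density of $(\tau_1,U_{\tau_1^-})$ via a first-entrance decomposition. It instead builds the purely elastic ($p=c=1$) solution by folding the \emph{free} process: with $(Y,V)$ the free Langevin pair started under $\mu_0$, set $X^r_t=|Y_t|$ and $U^r_t=V_t\,\text{sign}(Y)_t^+$. The symmetry of $L$ is used here --- and only here --- to ensure that $\int_0^{\cdot}\text{sign}(Y)_s^-\,dL_s$ is again a symmetric $\alpha$-stable L\'evy process, so that $(X^r,U^r)$ is a weak solution of \eqref{eq:ConfinedModel} with $p=c=1$. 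An explicit Fourier computation shows the free density $\rho^f(t,\cdot,\cdot)$ is $\Cc^\infty$; the reflected law therefore has smooth density $\rho^f(t,x,u)+\rho^f(t,-x,-u)$ on $\{x\ge0\}$, its boundary values at $x=0$ give classical trace functions, and identity \eqref{eq:trace1} applied to this reflected process then \emph{forces} $\PP\circ(\tau_1,U_{\tau_1^-})^{-1}\ll \big(\ind_{\{0\le t\le T\}}dt\big)\otimes\big(u\,\ind_{\{u<0\}}du\big)$. Your explanation of why symmetry matters (``genuine oscillation'') is not the operative reason: oscillation requires only $\rho\in(0,1)$, whereas the folding trick needs the full distributional symmetry $L\law -L$.
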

As a preliminary result for the proof of Theorem \ref{thm:tracethm}, let us show that
\begin{lem}\label{lem:2}
$\PP\circ (\tau_1,U_{\tau_1^-})^{-1}$ is absolutely continuous with respect to the measure $u\ind_{\{u<0\}}du\otimes dt$.
\end{lem}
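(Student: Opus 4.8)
The plan is to reduce the absolute continuity of the law of the pair $(\tau_1, U_{\tau_1^-})$ to the regularity of the transition density of the free integrated stable process $(A_t, L_t) = (\int_0^t L_s\,ds, L_t)$ started from $(x,u)$. First I would recall that, since $\alpha$-stable L\'evy processes have smooth transition densities, the pair $(A_t + x + ut, L_t + u)$ admits for each $t>0$ a density $q_t(x,u;\cdot,\cdot)$ on $\RR^2$ which is jointly smooth in all variables on $(0,\infty)\times\RR^4$ (this follows from hypoellipticity of the Kolmogorov operator $\partial_t + u\partial_x + \partial_u^\alpha$, or directly from the explicit representation via the stable density $q$ used already in the proof of Lemma \ref{lem:NegativeMoment} and in Section \ref{sec:3}). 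Here I use that $L$ is symmetric, so that $-L \law L$ and the time-reversal arguments of \cite{ProSim-15} apply cleanly; in fact the required density statement is implicit in the Mellin formula \eqref{eq:Utau}, whose right-hand side is obtained precisely by integrating such a density against a power of the velocity variable.

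Next I would write, for a bounded measurable test function $\varphi$ on $(0,\infty)\times(-\infty,0)$ and the hitting time $\tau_1 = \inf\{t>0: A_t + x + ut = 0\}$ of the free process started from $(x,u)$ with $x>0$,
$$
\EE_{(x,u)}\big[\varphi(\tau_1, U_{\tau_1^-})\big]
= \EE_{(x,u)}\big[\varphi(\tau_1, L_{\tau_1})\big],
$$
and then decompose the expectation by conditioning on the position of the process an instant before $\tau_1$. Concretely, fix $\varepsilon>0$ and use the strong Markov property at time $\tau_1 - \varepsilon$ on the event $\{\tau_1 > \varepsilon\}$: given $(A_{\tau_1-\varepsilon}, L_{\tau_1-\varepsilon}) = (y', v')$ with $y'>0$, the remaining time to hit $0$ and the velocity at hitting have a joint law that one can compare to the density $q_\varepsilon(y', v'; 0, \cdot)$ times a Jacobian coming from the change of variables "first coordinate $= 0$". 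Letting $\varepsilon \to 0$ along a suitable procedure (or, more robustly, integrating the occupation formula: for any nonnegative $g$, $\EE_{(x,u)}\big[\int_0^{\tau_1} g(A_s+x+us, L_s+u, s)\,ds\big] = \int_0^\infty \iint_{(0,\infty)\times\RR} g(y,v,s)\, q_s(x,u;y,v)\,ds\,dy\,dv$, and pushing $g$ towards an approximation of the hitting configuration) yields that $\PP\circ(\tau_1, U_{\tau_1^-})^{-1}$ is dominated by a measure of the form $C\, |v|\, q_t(x,u;0,v)\,\mathbf{1}_{\{v<0\}}\,dv\,dt$, which is absolutely continuous with respect to $\mathbf{1}_{\{v<0\}}|v|\,dv\otimes dt$. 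Integrating against $\mu_0(dx,du)$ preserves absolute continuity.

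Alternatively — and this is the cleaner route I would actually favour — I would argue by contradiction: suppose $\PP\circ(\tau_1,U_{\tau_1^-})^{-1}$ charges a set $N$ of zero $(\,|v|\,dv\otimes dt)$-measure, i.e. a set whose sections $N_t \subset (-\infty,0)$ have Lebesgue measure zero for a.e.\ $t$. On $\{\tau_1 > s\}$ the conditional law of $(\tau_1, U_{\tau_1^-})$ given $\mathcal{F}_s$ equals $\PP\circ(\tau_1-s, U_{(\tau_1-s)^-})^{-1}$ evaluated at the (random) interior starting point $(X_s, U_s)$ with $X_s>0$; so it suffices to treat $x>0$. For $x>0$ one can simply integrate the exit identity over a short time window: by the smoothness of $q_t$ in the velocity variable, the sub-probability measure $B \mapsto \PP_{(x,u)}(\tau_1 \in dt,\, L_{\tau_1} \in B)$ has, for each $t$ in a full-measure set, a density in $v$ obtained by differentiating $v \mapsto \PP_{(x,u)}(\tau_1 \le t,\, L_{\tau_1} \le v)$, and this density is finite because it is controlled by $\int q_t(x,u;0,w)\,\mathbf{1}_{\{w\le v\}}|w|\,dw$-type quantities appearing in \eqref{eq:Utau}. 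The main obstacle is the usual subtlety that $\tau_1$ is the hitting time of a \emph{level} by the \emph{degenerate} (non-Markovian in $x$ alone) component $A_t+x+ut$, so one cannot invoke a one-dimensional local-time density argument directly; it is precisely here that the hypoellipticity / explicit stable-density input is essential, and where the symmetry hypothesis on $L$ is used to import the estimates of \cite{ProSim-15,ProSim-16} verbatim. Once that density statement is in hand, the conclusion is immediate.
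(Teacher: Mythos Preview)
Your proposal has genuine gaps at the key step. In the first route, $\tau_1-\varepsilon$ is \emph{not} a stopping time, so the strong Markov property cannot be applied there; and the occupation identity you write,
\[
\EE_{(x,u)}\Big[\int_0^{\tau_1} g(A_s+x+us,\,L_s+u,\,s)\,ds\Big]=\int_0^\infty\!\iint_{(0,\infty)\times\RR} g(y,v,s)\,q_s(x,u;y,v)\,ds\,dy\,dv,
\]
is false: the left-hand side involves the \emph{killed} Green function, not the free density $q_s$ restricted to $\{y>0\}$. The difference between the two is precisely (an integral of) the unknown hitting law of $(\tau_1,U_{\tau_1^-})$, so the formula is circular. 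Your second route is likewise circular: asserting that $v\mapsto \PP_{(x,u)}(\tau_1\le t,\,L_{\tau_1}\le v)$ is differentiable for a.e.\ $t$ is exactly the absolute continuity statement to be proved, and the vague control by ``$\int q_t(x,u;0,w)\ind_{\{w\le v\}}|w|\,dw$-type quantities'' does not follow from \eqref{eq:Utau}, which is an identity for moments, not a pointwise density bound.

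The paper's argument avoids these difficulties by a reflection trick that uses the symmetry of $L$ in a structural way, not merely to import moment estimates. One sets $X^r_t=|Y_t|$ and $U^r_t=V_t\,\text{sign}(Y)^+_t$ for the \emph{free} process $(Y,V)$; by It\^o's formula and the fact that $\int_0^t \text{sign}(Y)^-_s\,dL_s$ is again a symmetric $\alpha$-stable process, $(X^r,U^r)$ is a weak solution of the purely reflected model $(p=c=1)$. Its time-marginal density is then explicit, $\rho^r(t,x,u)=\rho^f(t,x,u)+\rho^f(t,-x,-u)$, with $\rho^f$ the smooth free density (obtained by showing the characteristic function is integrable). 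Smoothness up to $\{x=0\}$ gives classical trace functions $\gamma^\pm$, and the variational identity \eqref{eq:trace1} then identifies $\sum_n\PP\circ(\tau_n,U_{\tau_n^-})^{-1}$---hence in particular $\PP\circ(\tau_1,U_{\tau_1^-})^{-1}$---as absolutely continuous with respect to $u\ind_{\{u<0\}}du\otimes dt$. The point is that the hitting law is read off \emph{a posteriori} from the explicit density of the reflected process, rather than constructed directly from the free density via a limiting or differentiation procedure.
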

\begin{proof}[Proof of Lemma \ref{lem:2}] The idea of the proof relies on showing that  \eqref{eq:SDE2} in the case of a purely reflecting wall ($p=1$, $c=1$) admits trace functions (in a classical sense) and to deduce from \eqref{eq:trace1} that, for all $n$, $\PP\circ (\tau_n,U_{\tau_n^-})^{-1}$  is absolutely continuous with respect to $u\ind_{\{u<0\}}du\otimes dt$.\\

 \noindent
First, let us consider the distribution $\mu_t^f$ of the (free) Langevin processes
$$V_t=U_0+L_t\quad \text{ and }\quad Y_t=X_0+\int_0^t V_s\,ds.$$ For all $t>0$, $\lambda,\omega\in \RR$, we have :
\begin{align*}
\widehat{\mu}^f_t(\omega,\lambda):=\EE\left[e^{i \omega Y_t+i\lambda V_t}\right]&=\iint_{\RR\times\RR} e^{i\omega (x+ut)+i\lambda u}\EE\left[e^{i\omega \int_0^t L_s\,ds+i\lambda L_t}\right]\mu_0(dx,du)\\
&=\iint_{\RR\times\RR} e^{i\omega (x+ut)+i\lambda u}e^{-t \int_0^1\left|t\omega r+\lambda\right|^{\alpha}dr }\mu_0(dx,du)
\end{align*}
Then, the successive changes of variables $\tilde{\lambda}=\lambda/\omega$, $\tilde{\omega}^\alpha=\omega^\alpha \int_0^1|tr+\tilde{\lambda}|^\alpha \,dr$ yield
\begin{multline*}
\iint_{\RR\times\RR} |\widehat{\mu}^f_t(\omega,\lambda)| d\lambda\,d\omega\\\leq \iint_{\RR\times\RR} e^{-t\int_0^1\left|t\omega r+\lambda \right|^{\alpha}\,dr}\,d\lambda\,d\omega=\left(\int_\RR \frac{1}{\left(\int_0^1\left|rt+\tilde{\lambda} \right|^{\alpha}dr\right)^{\frac{2}{\alpha}}}\,d\tilde{\lambda}\right)\left(\int_\RR |\tilde{\omega}|e^{-t|\tilde{\omega}|^{\alpha}}\,d\tilde{\omega}\right)<\infty,
\end{multline*}
hence, for all $0<t\leq T$, the Fourier transform $\widehat{\mu}_t^f$ is integrable on $\RR\times\RR$. This implies (see e.g. Jacob and Protter \cite{JacPro-04}, Theorem 13.1) that
the distribution $\mu_{t}^f$ of $(Y_t,V_t)$ admits a bounded continuous Lebesgue density $\rho^f(t)$ on $\RR\times\RR$ given by
\[
\rho^f(t,y,v)=\frac{1}{(2\pi)^2}\iint_{\RR\times\RR} e^{-i\omega y-i\lambda v}\left(\iint_{\RR\times\RR} e^{i\omega (x+ut)+i\lambda u}e^{-t\int_0^1\left|t\omega r+\lambda\right|^{\alpha}\,dr}\mu_0(dx,du)\right)\,d\lambda\,d\omega.
\]
 Additionally, for all $k,l>1$, by applying the same change of variables as above,
\begin{align*}
 \left|\partial^{k}_y\partial^{l}_v\rho^f(t,y,v)\right|&\leq \iint_{\RR\times\RR} |\omega|^k|\lambda|^l e^{-t\int_0^1\left|t\omega r+\lambda \right|^{\alpha}\,dr}\,d\lambda\,d\omega\\
 &\leq\left(\int_\RR \frac{|\tilde{\lambda}|^l}{\left(\int_0^1|rt+\tilde{\lambda}|^{\alpha}\,dr\right)^{\frac{k+l+2}{\alpha}}}\,d\tilde{\lambda}\right)\left(\int_\RR |\tilde{\omega}|^{k+l+1}e^{-t|\tilde{\omega}|^{\alpha}}\,d\tilde{\omega}\right)<\infty,
\end{align*}
from which we deduce that, for all $t>0$, $\rho^f(t)$ is $\Cc^{\infty}$ on $\RR\times\RR$. \\

\noindent
Next, define
\[
X^r_t=|Y_t|,\quad U^r_t=V_t\,\text{sign}(Y)_t^+,\qquad \,t\geq 0,
\]
where $(\text{sign}(Y)_t^+,\,t\geq 0)$ is the c\`adl\`ag modification of $(\text{sign}(Y_t),\,t\geq 0)$.
Since, for any $n$, $\zeta_{n+1}=\inf\{t>\zeta_n,\,Y_t=0\}$ (with $\zeta_0=0$) is a predictable stopping time, $t\mapsto L_t$ never jumps a.s. at $\zeta_n$ (see e.g. Blumenthal \cite{Blumenthal-92}, Theorem $5.1$). It\^o's formula then yields that
\begin{align*}
X^r_t&=X_0+\int_0^t U^r_s\,ds,\\
U^r_t&=U_0+\int_0^t\text{sign}(Y)_s^- dL_s+\sum_{0<s\leq t}V_{s^-}\triangle \text{sign}(Y)_s^-\ind_{\{\triangle \text{sign}(Y)_s^-\neq 0\}} +\sum_{0<s\leq t}\triangle L_s\triangle \text{sign}(Y)_s^-\\
&=U_0+\int_0^t\text{sign}(Y)_s^- dL_s-2\sum_{0<s\leq t}U^r_{s^-}\ind_{\{\triangle X^r_s\neq 0\}}.
\end{align*}
Thanks to the symmetric property of $(L_t,\,t\geq 0)$ and the fact the $L$ and $\text{sign}(Y)$ a.s. do not jump at the same time, $(\int_0^t\text{sign}(Y)_s^- dL_s,\,t\geq 0)$ is also a symmetric $\alpha$-stable L\'evy process, and $((X^r_t,V^r_t),\,t\geq 0)$ is a weak solution to the Langevin model \eqref{eq:ConfinedModel} with purely elastic reflection. Therefore $\PP\left(X^r_t\in dx, U^r_t\in du\right)$ admits a smooth density function $\rho^r$ given by
\[
\PP\left(X^r_t\in dx, U^r_t\in du\right)=\left(\rho^f(t,x,u)+\rho^f(t,-x,-u)\right)\ind_{\{x\geq 0\}}\,dx\,du.
\]
Owing to the smoothness of $\mu_t^f$ and replicating the arguments of \cite[Theorem 2.3]{BoJa-11}, we deduce that the natural trace functions satisfying \eqref{eq:Variational1d} in the purely reflective case are given by
\[
\gamma^\pm(\rho^r)(t,0,u)=\left(\rho^f(t,0,u)+\rho^f(t,-0,-u)\right)\ind_{\{x\geq 0,\pm u< 0\}}.
\]
According to \eqref{eq:trace1}, this is enough to ensure that
\[
\PP\circ(\tau^r_1,U^r_{\tau_1^-})^{-1}=\PP\circ(\tau_1,U_0+L_{\tau_1^-})^{-1}
\]
admits a density with respect to $\left(\ind_{\{0\leq t\leq T\}}dt\right)\otimes\left( u\ind_{\{u\leq 0\}} du\right)$.
 \end{proof}
\begin{proof}[Proof of Theorem \ref{thm:tracethm}] For $(P_1)$, applying Lemma \ref{lem:2} and using \eqref{hyp:Maxwell} and the Markov property, we immediately deduce that for all $n\in\mathbb{N}$
\[
\PP(\tau_n\in dt,\,U_{\tau_n^-}\in du)
\]
admits a density with respect to the measure $u\ind_{\{u<0\}}du\otimes dt$, and that $(P_1)$ is satisfied.\\
For $(P_2)$, assuming that $0\leq p<1$, by the Markov property, we have
\begin{align*}
\EE\left[e^{-\tau_{n+1}}\right]=\EE\left[e^{-\tau_n}\EE_{(0,U_{\tau_{n}})}\left[e^{-\tau_1}\right]\right]%
&=\EE\left[e^{-\tau_n}\left((1-p)\EE_{(0,M_{n})}\left[e^{-\tau_1}\right]+p\EE_{(0,-cU_{\tau^-_{n}})}\left[e^{-\tau_1}\right]\right)\right]\\
&\leq \EE\left[e^{-\tau_n}\right]\EE\left[\left((1-p)\EE_{(0,M_{1})}\left[e^{-\tau_1}\right]+p\right)\right].
\end{align*}
Therefore, setting $\varrho:=\EE\left[(1-p)\EE_{(0,M_{1})}\left[e^{-\tau_1}\right]+p\right]$ which is strictly smaller than 1,
\[
\EE\left[e^{-\tau_{n+1}}\right]\leq \varrho^n\EE\left[e^{-\tau_1}\right].
\]
For any given $0<T<+\infty$, choosing $c_T>0$ such that $\ind_{\{r\leq T\}}\leq c_Te^{-r}$, it follows that
\[
\sum_{n\geq 1}\PP(\tau_n\leq T)\leq c_T\sum_{n\geq 1}\EE\left[e^{-\tau_n}\right]\leq c_T\EE\left[e^{-\tau_1}\right]\sum_{n\geq 0}\varrho^n<\infty.
\]
In the case where $p=1$ and $c>c_{\text{crit}}$, we first write, for $n\geq 2$ :
\begin{align*}
\sum_{n\geq 1}\PP\left(\tau_{n}\leq T\right)\leq \sum_{n\geq 1}\PP\left(\tau_{n+1}-\tau_n\leq T\right).
\end{align*}
Then, using the Markov's inequality with $\delta>0$ and the decomposition (\ref{eq:taun}),
\begin{align*}
\PP\left(\tau_{n+1}-\tau_n\leq T\right)&=\iint_{(0,\infty)\times\RR} \PP_{(x,u)}\left( \tau_{n+1}-\tau_n\leq T\right)\mu_0(dx,du)
\leq \iint_{(0,\infty)\times\RR} \EE_{(x,u)}\left[ \frac{T^\delta}{\left(\tau_{n+1}-\tau_n\right)^\delta}\right]\mu_0(dx,du)\\
&\leq T^\delta\EE\left[\xi_1^{-\delta}\right]\EE\left[\left(\prod_{j=1}^n\left|c\ell_i\right|^\alpha\right)^{-\delta}\right]\iint_{(0,\infty)\times\RR}\EE_{(x,u)}\left[ |U_{\tau_1^-}|^{-\delta}\right] \mu_0(dx,du)\\
&\leq T^\delta\EE\left[\xi_1^{-\delta}\right]
\left(\EE\left[\left|c\ell_1\right|^{-\alpha\delta}\right]\right)^n\iint_{(0,\infty)\times\RR}\EE_{(x,u)}\left[ |U_{\tau_1^-}|^{-\delta}\right] \mu_0(dx,du).
\end{align*}
According to Lemma \ref{lem:NegativeMoment}, taking $\delta <\nu$ immediately ensures that $\EE\left[\xi_1^{-\delta}\right]$ is finite. Next, since $f(\delta)= \EE\left[\left|c\ell_1\right|^{-\alpha\delta}\right]$ is such that $f(0)=1$ and
\[
f'(0)=\left(\EE\left[\exp\left(-\alpha\delta\ln\left|c\ell_1\right|\right)\right]\right)'_{|\delta=0}=-\alpha
\EE\left[\ln\left|c\ell_1\right|\right]<-\alpha\left(\ln(c_{\text{crit}})+\EE\left[\ln\left|\ell_1\right|\right]\right)=0,
\]
$f$ is decreasing near $0$. Hence, choosing $\delta>0$ small enough, $\varrho=\EE\left[\left|c\ell_1\right|^{-\alpha\delta}\right]<1$, and we get  $\PP\left(\tau_{n+1}-\tau_n\leq T\right)\leq C\varrho^n$ with $\varrho<1$. This enables to conclude $(P_2)$.
\end{proof}

\paragraph{Acknowledgements:}

We thank the referee for his/her careful reading and for pointing out a misstatement in the first version of this paper. The first author acknowledges the support of the Russian Academic Excellence Project '5-100'. Both authors acknowledge the partial support of the FONDECYT INICIACI\'ON Project N\textordmasculine  11130705, and the support of the N\'ucleo Milenio MESCD.

\end{document}